\theoremstyle{plain}
\newtheorem{thm}{Theorem}
\newtheorem{prop}{Proposition}
\newtheorem{rem}{Remark}
\def \be {\begin{equation}}
\def \ee {\end{equation}}
\begin{document}
\title[Oscillatory behavior in a model of mean field interacting spins]{Oscillatory behavior in a model of non-Markovian mean field interacting spins}

\author{Paolo Dai Pra}
\author{Marco Formentin}
\author{Guglielmo Pelino}
\noindent \address[P. Dai Pra, M. Formentin, G. Pelino]{Department of Mathematics ``Tullio Levi-Civita'', \newline \indent University of Padua, \newline \indent Via Trieste 63, 35121 Padova, Italy.}
\address[P. Dai Pra]{Department of Computer Science, \newline \indent University of Verona, \newline \indent Strada Le Grazie 15, 37134 Verona, Italy.}
\address[M. Formentin]{Padova Neuroscience Center, \newline \indent University of Padua,\newline \indent via Giuseppe Orus 2, 35131 Padova, Italy.}
\address[G. Pelino]{School of Mathematics, \newline \indent University of Edinburgh, James Clerk Maxwell Building, \newline \indent Mayfield Rd, EH9 3FD, Edinburgh, UK.}
\vspace{1cm}
\email[P. Dai Pra]{daipra@math.unipd.it}
\email[M. Formentin]{marco.formentin@unipd.it}
\email[G. Pelino]{guglielmo.pelino@math.unipd.it}
\thanks{\textbf{Acknowledgments:} The authors acknowledge financial support through the project ``Large Scale Random Structures'' of the Italian Ministry of Education, Universities and Research (PRIN 20155PAWZB-004). The last author is partially supported by the PhD Program in Mathematical Science, Department of Mathematics, University of Padua (Italy), Progetto Dottorati - Fondazione Cassa di Risparmio di Padova e Rovigo. We would finally like to thank Giambattista Giacomin for helpful discussions.
}

\subjclass[2010]{60K15, 60K35, 82C22, 82C26} %
\keywords{Mean field interacting particle systems, semi-Markov spin systems, Curie--Weiss model, emergence of periodic behavior}

\date{\today}

\begin{abstract}
We analyze a non-Markovian mean field interacting spin system, related to the Curie--Weiss model. 
We relax the Markovianity assumption by replacing the memoryless distribution of the waiting times of a classical spin-flip dynamics with a distribution with memory. The resulting stochastic evolution for a single particle is a spin-valued renewal process, an example of two-state semi-Markov process. We associate to the individual dynamics an equivalent Markovian description, which is the subject of our analysis. We study a corresponding interacting particle system, where a mean field interaction is introduced as a time scaling, depending on the overall magnetization of the system, on the waiting times between two successive particle's jumps. Via linearization arguments on the Fokker-Planck mean field limit equation, we give evidence of emerging periodic behavior. Specifically, numerical analysis on the discrete spectrum of the linearized operator, characterized by the zeros of an explicit holomorphic function, suggests the presence of a Hopf bifurcation for a critical value of the temperature, which is in accordance with the one obtained by simulating the $N$-particle system.
\end{abstract}

\maketitle

\section{Introduction}

Emerging periodic behavior in complex systems with a large number of interacting units is a commonly observed phenomenon in neuroscience 
(\cite{ermentrout}), ecology (\cite{turchin}), socioeconomics (\cite{chen, weidlich}) and life sciences in general. From a mathematical standpoint, when modeling such a phenomenon it is natural to consider large families of microscopic identical units evolving through noisy interacting dynamics where  each individual particle has no natural tendency to behave periodically and oscillations are rather an effect of self-organization as they emerge in the macroscopic limit when the number of particles tends to infinity.  Within this modeling framework mean field models have received much attention due to their analytical tractability. Throughout the paper we refer to the emergence of self-organized periodic oscillations with the term self-sustained periodic behavior. One of the goals of the mathematical theory in this field is to understand which types of microscopic interactions and mechanisms can lead to or enhance the above self-organization. Among others, we cite noise (\cite{daipra_regoli}, \cite{scheutzow}, \cite{touboul2}), dissipation in the interaction potential (\cite{andreis}, \cite{collet_dp_forme},  \cite{collet_formentin}, \cite{daipra_reg_diss}), delay in the transmission of information and/or frustration in the interaction network (\cite{collet_form_tov}, \cite{locherbach}, \cite{touboul}). In particular, in \cite{locherbach} the authors consider non-Markovian dynamics, studying systems of interacting nonlinear Hawkes processes for modeling neurons.

Although not proved in general, a strong belief in the literature is that, at least for Markovian dynamics, self-sustained periodic behavior cannot emerge if one does not introduce some time-irreversible phenomenon in the dynamics, as it is the case in all the above cited works (see e.g.\!  \cite{bertini}, \cite{giacomin_poquet}). The model treated here, in which the limit dynamics is still reversible with respect to the stationary distribution around which cycles emerge (see Remark \ref{reversibility_renewal} below), suggests that this paradigm could be false for the non-Markovian case.

Specifically, we give numerical and mathematical evidence of the emergence of self-sustained periodic behavior in a mean field spin system related to the Curie--Weiss model, which happens to belong to the following universality class: it features the presence of a unique stable neutral phase for values of the parameters corresponding to high temperatures, the emergence of periodic orbits in an intermediate range of the parameter values, and a subsequent ferromagnetic ordered phase for increasingly lower temperatures.  Our recipe consists in replacing the Poisson distribution of the spin-flip times with another renewal process, thus making the individual spin dynamics non-Markovian. In details, we consider the distribution of the interarrival times to have tails proportional to $e^{-t^{\gamma +1}}$, for $\gamma =1,2$. Then, we introduce an interaction among the spins via a time rescaling depending on the overall magnetization of the system.
The specific choice of interarrival time distribution makes the computations developed in Section \ref{proofs} the easiest as possible (to our knowledge), allowing for an explicit characterization of the discrete spectrum of the linearized operator. A question which can arise naturally is whether similar results can be found for other classes of waiting times. Although we do not have a general answer to this, we want to remark that simulations with different distributions highlighted the same characteristics (e.g.\! Gamma distribution, tails proportional to $e^{-t^{\gamma +1}}$ with $\gamma \in \mathbb{R}$ such that $\gamma \geq 1$). All the working examples we considered feature exponentially or super-exponentially decaying tails. On the other hand, we have examples of polynomial tails (e.g.\! inverse Gamma distribution) where no oscillatory behavior was experienced.

The paper is organized as follows: in Section \ref{mf_model} we describe the model and the results obtained. In particular, before introducing the model (Subsection \ref{meanfield_model}), we start by recalling basic facts about the Curie--Weiss model and its phase transitions (Subsection \ref{motivation}); we then proceed with the results on the propagation of chaos (Subsection \ref{prop_chaos}), and on the linearized Fokker-Planck equation around a neutral equilibrium, for two different choices of renewal dynamics (Subsection \ref{local_fp}). Notably, we determine the discrete spectrum of the linearized operator in terms of the zeros of two holomorphic functions. Section \ref{numerics} contains the numerical results on the discrete spectrum, studied as a function of the interaction parameters (Subsection \ref{numerical_evidence_eigen}). These results are then compared in Subsection \ref{finite_particle} with the ones obtained by simulating the finite particle system, finding a precise accordance between the two approaches. Section \ref{proofs} contains the proofs of the results of Section \ref{mf_model}.

\section{Model and Results}
\label{mf_model}

\subsection{Motivation}
\label{motivation}
As we mentioned above, the model we consider can be seen as a proper modification of the Curie--Weiss dynamics. When we refer to the latter, we mean a spin-flip type Markovian dynamics for a system of $N$ interacting spins $\sigma_i \in \left\{-1,1\right\}$, $i =1,\dots,N$, which is \textit{reversible} with respect to the equilibrium Gibbs probability measure on the space of configurations $\left\{-1,1\right\}^N$, 
\begin{equation}
\label{eqn:gibbs_cw}
P_{N,\beta}(\bm{\sigma}) := \frac{1}{Z_N(\beta)} \exp\left[-\beta H(\bm{\sigma})\right],
\end{equation}
with $\bm{\sigma} := (\sigma_1,\dots,\sigma_N) \in \left\{-1,1\right\}^N$, $\beta > 0$ (\textit{ferromagnetic} case), $Z_N(\beta)$ is a normalizing constant, and $H$ is the \textit{Hamiltonian}, which in the Curie--Weiss setting is given by
\begin{equation}
\label{eqn:hamilt}
H_N(\bm{\sigma}) := - \frac{1}{2N}\left(\sum_{i=1}^N \sigma_i\right)^2.
\end{equation}
Denote also the empirical \textit{magnetization} as $m^N := \frac{1}{N}\sum_{i=1}^N \sigma_i$.
Note that the distribution \eqref{eqn:gibbs_cw} gives higher probability to the configurations with minimal energy, which by \eqref{eqn:hamilt} are the ones where the individual spins are aligned in the same state. The equilibrium model undergoes a \textit{phase transition} tuned by the interaction parameter $\beta >0$, which can be recognized by proving a Law of Large Numbers for the equilibrium empirical magnetization
\begin{equation}
\label{eqn:phase_tr_eq}
\text{Law}(m^N) \xrightarrow{N \to +\infty}
\begin{cases}
\delta_0, \ \ \ &\text{ if } \beta \leq 1,\\
\frac{1}{2}\delta_{+m_\beta} + \frac{1}{2}\delta_{-m_\beta}, \ \ \ &\text{ if } \beta > 1,
\end{cases}
\end{equation}
where $m_\beta > 0$ is the so-called \textit{spontaneous magnetization}.
When we turn to the dynamics, different choices can be made in order to satisfy the above-mentioned reversibility with respect to \eqref{eqn:gibbs_cw}. The prototype is a continuous-time spin-flip dynamics defined in terms of the infinitesimal generator $L$, applied to a function $f : \left\{-1,1\right\}^N \to \mathbb{R}$,
\begin{equation}
\label{eqn:cw_gen}
Lf(\bm{\sigma}) = \sum_{i=1}^N e^{-\beta \sigma_i m^N}\left[ f(\bm{\sigma}^i)-f(\bm{\sigma})\right],
\end{equation}
where $\bm{\sigma}^i \in \left\{-1,1\right\}^N$ is obtained from $\bm{\sigma}$ by \textit{flipping} the $i$-th spin.
Dynamics \eqref{eqn:cw_gen} induces a continuous-time Markovian evolution for the empirical magnetization process $m^N(t)$, which is given in terms of a generator $\mathcal{L}$ applied to a function $g :[-1,1] \to \mathbb{R}$:
\begin{equation}
\label{eqn:cw_real_gen}
\mathcal{L}^Ng(m) = N \frac{1+ m}{2} e^{-\beta m}\left[g\left(m-\frac{2}{N}\right)-g(m)\right] + N \frac{1-m}{2}e^{\beta m}\left[g\left(m+\frac{2}{N}\right) - g(m)\right].
\end{equation}
It is easy to obtain the weak limit of the sequence of processes $\big(m^N(t)\big)_{t \geq 0}$, by studying the uniform convergence of the generator \eqref{eqn:cw_real_gen} as $N \to +\infty$ (see e.g.\! \cite{ethier}).
The limit process $(m(t))_{t \geq 0}$ is deterministic and solves the Curie--Weiss ODE
\begin{equation}
\label{eqn:dynamics_cw}
\begin{cases}
\dot{m}(t) = 2\sinh(\beta m(t)) - 2 m(t) \cosh(\beta m(t)),\\
m(0) = m_0 \in [-1,1].
\end{cases}
\end{equation}
The presence of the phase transition highlighted in \eqref{eqn:phase_tr_eq} can be recognized as well in the out-of-equilibrium dynamical model \eqref{eqn:dynamics_cw}.
Indeed, studying the long-term behavior of \eqref{eqn:dynamics_cw}, one finds that:
\begin{itemize}
\item for $\beta \leq 1$, \eqref{eqn:dynamics_cw} possesses a unique stationary solution, globally attractive, constantly equal to $0$;
\item for $\beta > 1$, $0$ is still stationary but it is unstable; two other symmetric stationary locally attractive solutions, $\pm m_\beta$, appear: the two non-zero solutions to $m = \tanh(\beta m)$.
The dynamics $m(t)$ gets attracted for $t \to +\infty$ to the polarized stationary state which has the same sign as the initial magnetization $m_0$.
\end{itemize}

Another concept which we refer to in what follows is that of a \textit{renewal} process, a generalization of the Poisson process. We identify a renewal process with the sequence of its \textit{interarrival} times (also commonly referred to as sojourn times or waiting times in the literature) $\left\{T_n\right\}_{n=1}^\infty$, i.e.\! the holding times between the occurrences of two consecutive events. The Poisson process is characterized by having independent and identically distributed interarrival times, where each $T_i$ is exponentially distributed. In particular, the \textit{memoryless} property $\mathbb{P}(T_i > s+t | T_i > t) = \mathbb{P}(T_i > s)$, holds for any $s,t \geq 0$.
The interarrival times of a renewal process are still independent and identically distributed, but their distribution is not required to be exponential. We recall that a continuous-time homogeneous Markov chain can be identified by a Poisson process, modeling the jump times, and a stochastic transition matrix, identifying the possible arrival states at each jump time.
Due to the lack of the memoryless property, when one replaces the Poisson process in the definition of the spin-flip dynamics with a more general renewal process, the resulting evolution is thus non-Markovian. In the literature, the associated dynamics is referred to as \textit{semi-Markov} process, first introduced by Levy in \cite{levy}.

\subsection{The dynamics}
\label{meanfield_model}
In order to introduce the model, we start by observing that the Curie--Weiss dynamics \eqref{eqn:cw_gen}, as any spin-flip Glauber dynamics, can be obtained by adding interaction to a system of independent spin-flips: at the times of a Poisson process of intensity $1$, the spin in a given site flips; different sites have independent Poisson processes. Our aim here is to replace Poisson processes by more general renewal processes, otherwise keeping the structure of the interaction.
For the moment we focus on a single spin $\sigma(t) \in \left\{-1,1\right\}$. If driven by a Poisson process of intensity $1$, its dynamics has infinitesimal generator
\begin{equation}
\label{eqn:markovian}
\mathcal{L}f(\sigma) = f(-\sigma) - f(\sigma),
\end{equation}
$f : \left\{-1,1\right\} \to \mathbb{R}$.
If the Poisson process is replaced by a renewal process, the spin dynamics is not Markovian. In what follows, we refer to the resulting dynamics as a spin-valued renewal process, that is an example of two-states semi-Markov process.
We can associate a Markovian description to the latter: define $y(t)$ as the time elapsed since the last spin-flip occured up to time $t$. Suppose that the waiting times $\tau$ (interchangeably referred to as interarrival times) of the renewal satisfy
\begin{equation}
\mathbb{P}(\tau > t) = \varphi(t),
\end{equation}
for some smooth function $\varphi : [0,+\infty) \to \mathbb{R}$. Then, the pair $(\sigma(t),y(t))_{t \geq 0}$ is Markovian with generator
\begin{equation}
\label{eqn:markovianized_gen}
\mathcal{L}f(\sigma,y) = \frac{\partial f}{\partial y}(\sigma,y) + F(y)[f(-\sigma,0) - f(\sigma,y)],
\end{equation}
for $f :  \left\{-1,1\right\}  \times \mathbb{R}^+ \to \mathbb{R}$, with
\begin{equation}
\label{eqn:form_of_the_rate}
 F(y) := -\frac{\varphi'(y)}{\varphi(y)}.
\end{equation}
This is equivalent to say that the couple $(\sigma(t), y(t))_{t \geq 0}$ evolves according to
\begin{equation}
\begin{cases}
\label{eqn:dyn}
(\sigma(t), y(t))  \mapsto (-\sigma(t),0),  \ \ \ \text{    with rate } F(y(t)), \\
 dy(t) = dt, \ \ \ \text{    otherwise.} 
\end{cases}
\end{equation}
Expression \eqref{eqn:form_of_the_rate} for the jump rate follows by observing that, for an interarrival time $\tau$ of the jump process $\sigma(t)$, we have
$$
\mathbb{P}(\sigma(t+h) = -\sigma | \sigma(t) = \sigma) = 1 - \mathbb{P}(\tau > t+h | \tau > t) = 1 - \frac{\varphi(t+h)}{\varphi(t)},
$$
for any $h > 0$.
Observe that when the $\tau$'s are exponentially distributed $F(y) \equiv 1$, so we get back to dynamics \eqref{eqn:markovian}.
Dynamics \eqref{eqn:markovianized_gen} can be perturbed by allowing the distribution of the waiting time for a spin-flip to depend on the current spin value $\sigma$; the simplest way is to model this dependence as a time scaling:
\begin{equation}
\label{eqn:tau_x}
\mathbb{P}(\tau > t | \sigma) = \varphi(a(\sigma) t).
\end{equation}
Under this distribution for the waiting times the generator of $(\sigma(t),y(t))_{t \geq 0}$ becomes:
$$
\mathcal{L}f(\sigma,y) = \frac{\partial f}{\partial y}(\sigma,y) + a(\sigma) F(a(\sigma)y)[f(-\sigma,0) - f(\sigma,y)].
$$

On the basis of what seen above, it is rather simple to define a system of mean-field interacting spins with non-exponential waiting times. 
For a collection of $N$ pairs $(\sigma_i(t),y_i(t))_{i=1,\dots,N}$, we set $m^N(t) := \frac{1}{N}\sum_{i=1}^N \sigma_i(t)$ to be the magnetization of the system at time $t$, and a parameter $\beta >0$ tuning the interaction between the particles.
The interacting dynamics is 
\begin{equation}
\label{eqn:int_dyn}
\begin{cases}
(\sigma_i(t), y_i(t)) \mapsto (-\sigma_i(t),0),  \ \ \ \text{    with rate } \ \  F\left(y_i(t) e^{-\beta \sigma_i(t) m^N(t)}\right)e^{-\beta \sigma_i(t) m^N(t)}, \\
dy_i(t) = dt, \ \ \ \text{    otherwise.} 
\end{cases}
\end{equation}
Denoting $\bm{\sigma}:= (\sigma_1,\dots, \sigma_N) \in \left\{-1,1\right\}^N$, $\bm{y} := (y_1,\dots,y_N) \in (\mathbb{R}^+)^N$, $m^N := \frac{1}{N}\sum_{i=1}^N \sigma_i$, the associated infinitesimal generator is
\begin{equation}
\label{eqn:mf_renewal}
\mathcal{L}^N f(\bm{\sigma},\bm{y}) = \sum_{i=1}^N \frac{\partial f}{\partial y_i}(\bm{\sigma},\bm{y}) + \sum_{i=1}^N F\left(y_i e^{-\beta \sigma_i m^N}\right)e^{-\beta \sigma_i m^N}\left[f(\bm{\sigma}^i,\bm{y}^i) - f(\bm{\sigma},\bm{y})\right],
\end{equation}
where $\bm{\sigma}^i$ is obtained from $\bm{\sigma}$ by flipping the $i$-th spin, while $\bm{y}^i$ by setting to zero the $i$-th coordinate. The additional factor $e^{-\beta \sigma_i(t) m^N(t)}$ in the jump rate in \eqref{eqn:int_dyn} follows from the observation we made in \eqref{eqn:tau_x} and the definition of $F(y) = -\frac{\varphi'(y)}{\varphi(y)}$. Note that, for $F \equiv 1$, we retrieve the Curie--Weiss dynamics \eqref{eqn:cw_gen} for the spins.

\subsection{Propagation of chaos}
\label{prop_chaos}
The macroscopic limit and propagation of chaos for the above class of models should be standard, although some difficulties may arise for general choices of $F$ not globally Lipschitz. 
For computational reasons which will be made clear below, we focus on the case $F(y) = y^\gamma$, for $\gamma \in \mathbb{N}$, which corresponds to considering, in the single spin model, the tails of the distribution of the interarrival times to be $\varphi(t) \propto e^{-\frac{t^{\gamma+1}}{\gamma+1}}$. 

When $F(y) = y^\gamma$, \eqref{eqn:int_dyn} becomes 
\begin{equation}
\label{eqn:final_gamma_ren}
\begin{cases}
(\sigma_i(t), y_i(t)) \mapsto (-\sigma_i(t),0),  \ \ \ \text{    with rate } \ \  y_i^{\gamma}(t) e^{-(\gamma+1)\beta \sigma_i(t) m^N(t)}, \\
dy_i(t) = dt, \ \ \ \text{    otherwise.} 
\end{cases}
\end{equation}
As for the Curie--Weiss model, dynamics \eqref{eqn:final_gamma_ren} is subject to a cooperative-type interaction: the spin-flip rate is larger for particles which are not aligned with the majority. 
Assuming propagation of chaos, at the macroscopic limit $N \to +\infty$ the representative particle $(\sigma(t), y(t))$ has a mean-field dynamics
\begin{equation}
\label{eqn:mf_final_renewal}
\begin{cases}
(\sigma(t), y(t)) \mapsto (-\sigma(t),0),  \ \ \ \text{    with rate } \ \ y^{\gamma}(t) e^{-(\gamma+1)\beta \sigma(t) m(t)}, \\
dy(t) = dt, \ \ \ \text{    otherwise,} 
\end{cases}
\end{equation}
with $m(t) = \mathbb{E}[\sigma(t)]$. To this dynamics we can associate (see \cite{kolokoltsov_book}) the non-linear infinitesimal generator
\begin{equation}
\label{eqn:mf_limit_renewal}
\mathcal{L}(m(t)) f(\sigma,y) = \frac{\partial f}{\partial y}(\sigma,y) + y^\gamma e^{-(\gamma+1)\beta \sigma m(t)}\left[f(-\sigma,0) - f(\sigma,y)\right],
\end{equation}
where the non-linearity is due to the dependence of the generator on $m(t)$, a function of the joint law at time $t$ of the processes $(\sigma(t),y(t))$.
In Section \ref{proofs} we study rigorously the well-posedeness of the pre-limit and limit dynamics and the propagation of chaos. The main result is collected in the following
\begin{thm}[Propagation of chaos]
\label{prop_chaos_renewal}
Fix $\gamma \in \mathbb{N}$, and let $T>0$ be the final time in \eqref{eqn:final_gamma_ren} and \eqref{eqn:mf_final_renewal}. Assume that $(\sigma_i(0), y_i(0))_{i=1,\dots,N}$ are $\mu_0$-chaotic for some probability distribution $\mu_0$ on $\left\{-1,1\right\} \times \mathbb{R}^+$. Then, the sequence of empirical measures $(\mu_t^N)_{t \in [0,T]}$ converges in distribution (in the sense of weak convergence of probability measures) to the deterministic law $(\mu_t)_{t \in [0,T]}$ on the path space of the unique solution to Eq.\! \eqref{eqn:mf_final_renewal} with initial distribution $\mu_0$.
\end{thm}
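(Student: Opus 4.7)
The plan is to prove Theorem~\ref{prop_chaos_renewal} via the standard two--step McKean--Vlasov strategy: first establish well--posedness of the nonlinear limit \eqref{eqn:mf_final_renewal}, then couple the $N$--particle system to $N$ independent copies of the limit through shared Poisson driving noise and close a Gronwall estimate.

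For well--posedness, I observe that the only nonlinear object in \eqref{eqn:mf_limit_renewal} is the scalar magnetization $m(t)=\mathbb{E}[\sigma(t)]\in[-1,1]$. Frozen at a fixed continuous trajectory $m:[0,T]\to[-1,1]$, the generator is linear and time--inhomogeneous, and the corresponding Markov process on $\{-1,1\}\times\mathbb{R}^+$ can be built pathwise by thinning a Poisson random measure $\mathcal{N}$ on $\mathbb{R}^+\times\mathbb{R}^+$ of Lebesgue intensity, the atom $(s,u)$ triggering a jump when $u\leq y^\gamma(s^-)\,e^{-(\gamma+1)\beta\sigma(s^-)m(s)}$. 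Setting $\Phi(m)(t):=\mathbb{E}[\sigma(t)]$ on the resulting process, Lipschitz regularity of $m\mapsto e^{-(\gamma+1)\beta\sigma m}$ on $[-1,1]$ combined with the deterministic bound $y(t)\leq t+y(0)$ shows that $\Phi$ is a contraction on $C([0,T_0];[-1,1])$ for $T_0$ small, and iteration covers $[0,T]$.

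For chaos, I would realize \eqref{eqn:final_gamma_ren} and an i.i.d.\ family $(\bar\sigma_i,\bar y_i)_{i=1}^N$ of copies of the limit process on a common probability space sharing the initial data and the driving measures $\mathcal{N}_1,\dots,\mathcal{N}_N$. Setting
\be
\rho(t):=\mathbb{E}\bigl[|\sigma_1(t)-\bar\sigma_1(t)|+|y_1(t)-\bar y_1(t)|\bigr],
\ee
which is label--independent by exchangeability, and decomposing each Poisson atom according to whether it is accepted by both, one, or neither thinning, together with the elementary bound $|y^\gamma e^{-(\gamma+1)\beta\sigma m^N}-\bar y^\gamma e^{-(\gamma+1)\beta\bar\sigma m}|\leq C_T(|y-\bar y|+|m^N-m|+\mathbf{1}_{\{\sigma\neq\bar\sigma\}})$ made possible by $y(t)\vee\bar y(t)\leq t+y(0)$, a direct calculation yields
\be
\rho(t)\leq C_T\int_0^t\bigl[\rho(s)+\mathbb{E}|m^N(s)-m(s)|\bigr]\,ds.
\ee
Writing $\bar m^N:=\tfrac{1}{N}\sum_j\bar\sigma_j$, the triangle inequality bounds $\mathbb{E}|m^N(s)-m(s)|$ by $\rho(s)+\mathbb{E}|\bar m^N(s)-m(s)|$, the latter being $O(N^{-1/2})$ by the LLN for i.i.d.\ bounded variables with mean $m(s)$. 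Gronwall then delivers $\sup_{t\in[0,T]}\rho(t)=O(N^{-1/2})$, which is the Sznitman--type chaos estimate and implies the claimed weak convergence of $(\mu^N_t)_{t\in[0,T]}$ to the deterministic trajectory $(\mu_t)_{t\in[0,T]}$.

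The main obstacle is the absence of global Lipschitz regularity of $F(y)=y^\gamma$ for $\gamma\geq 2$, explicitly flagged in the excerpt. Its resolution exploits the structural fact that $y$ grows deterministically at unit speed between jumps and is reset to zero at every jump, so on $[0,T]$ one has $y(t)\leq t+y(0)$; under a moment assumption on the $y$--marginal of $\mu_0$ (e.g.\ $\int y^{2\gamma}\,d\mu_0<+\infty$), which should be included in the hypotheses, the Poisson thinning is non--explosive and all constants above remain finite. In its absence, one can truncate the rate at $y\wedge R$, prove the coupling bound at fixed $R$, and let $R\to+\infty$ by monotone convergence. The only genuinely delicate step, once the rate is under control, is the careful bookkeeping of the four acceptance regions in $[0,T]\times\mathbb{R}^+$ that determine whether each of the two coupled processes flips at a given Poisson atom.
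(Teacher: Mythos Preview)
Your proposal is correct and follows the same Sznitman coupling strategy as the paper: shared Poisson driving measures, the a~priori bound on $y$ to recover Lipschitz control of the rate $y^\gamma e^{-(\gamma+1)\beta\sigma m}$, and a Gronwall closure. The only cosmetic differences are that the paper invokes Graham's well-posedness theorems directly rather than your fixed-point argument on $m$, and works from the start with $\mathbb{E}\bigl[\sup_{s\le t}|\cdot|\bigr]$ (which you need for convergence in the Skorohod path space) rather than the pointwise $\rho(t)$; your Gronwall step goes through unchanged once the $\sup$ is moved inside.
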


\subsection{Local analysis of the Fokker-Planck}
\label{local_fp}
In this section we illustrate the results on the local analysis of the Fokker-Planck equation for the mean-field limit dynamics \eqref{eqn:mf_final_renewal} with $\gamma = 1$ and $\gamma = 2$. Our approach is the following: we find a neutral stationary solution of interest, we linearize formally the dynamics around that equilibrium and we compute the discrete spectrum of the associated linearized operator, which we show to be given by the zeros of an explicit holomorphic function $H_{\beta,\gamma}(\lambda)$. In Subsection \ref{numerical_evidence_eigen} we then study numerically the character of the eigenvalues when $\beta$ varies: for both $\gamma=1,2$, we find that for all $\beta < \beta_c(\gamma)$ all eigenvalues have negative real part; at $\beta_c(\gamma)$ two eigenvalues are conjugate and purely imaginary, suggesting the possible presence of a Hopf bifurcation in the limit dynamics. These critical values of $\beta$ are then compared to the ones obtained by simulating the finite particle system in Subsection \ref{finite_particle}.
   
The Fokker-Planck equation associated to \eqref{eqn:mf_final_renewal} is a PDE describing the time evolution of the density function $f(t,\sigma,y)$ of the limit process $(\sigma(t),y(t))$. It is given by
\begin{equation}
\label{eqn:kfp_renewal}
\begin{cases}
\frac{\partial}{\partial t} f(t,\sigma,y) + \frac{\partial}{\partial y} f(t,\sigma,y) + y^\gamma e^{-(\gamma + 1)\beta \sigma m(t)} f(t,\sigma,y) = 0,\\
f(t,\sigma,0) = \int_{0}^{+\infty} y^\gamma e^{(\gamma + 1)\beta \sigma m(t)} f(t,-\sigma,y) dy, \\
m(t) =  \int_{0}^{\infty} [f(t,1,y) - f(t,-1,y)]dy,\\
1 = \int_{0}^{\infty} [f(t,1,y) + f(t,-1,y)]dy,\\
f(0,\sigma,y) = f_0(\sigma,y), \text{ for } \sigma\in \left\{-1,1\right\}, \ y \in \mathbb{R}^+.
\end{cases}
\end{equation}
 A general study of \eqref{eqn:kfp_renewal} is beyond the scope of this work. Here we just observe that \eqref{eqn:kfp_renewal} can be seen as a system of two quasilinear PDEs (one for $\sigma = 1$ and another for $\sigma = -1$), where the non-linearity enters in an integral form through $m(t)$ in the exponent of the rate function. Moreover, the boundary integral condition in the second line poses additional challenges.

Nevertheless, it is easy to exhibit a particular stationary solution to \eqref{eqn:kfp_renewal}:
\begin{prop}
\label{neutral_eq}
The function
\begin{equation}
\label{eqn:stat_sol}
f^*(\sigma,y) = \frac{1}{2\Lambda}e^{-\frac{y^{\gamma+1}}{\gamma+1}},
\end{equation}
with $\Lambda := \int_{0}^{+\infty} e^{-\frac{y^{\gamma+1}}{\gamma+1}}$, is a stationary solution to Sys.\! \eqref{eqn:kfp_renewal} with $m = 0$.
\end{prop}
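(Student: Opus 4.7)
The plan is to verify directly that $f^{*}(\sigma,y)$ satisfies each of the five conditions defining a stationary solution with $m=0$ in Sys.\ \eqref{eqn:kfp_renewal}. Since $f^{*}$ is time-independent and even in $\sigma$, conditions on the normalization and on the magnetization being zero will be essentially immediate, so the crux is checking the transport equation on $\mathbb{R}^{+}$ and, most delicately, the non-local boundary condition at $y=0$.

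First I would plug $f^{*}$ into the equation $\partial_{t}f+\partial_{y}f+y^{\gamma}e^{-(\gamma+1)\beta\sigma m(t)}f=0$. With $m(t)\equiv 0$ the exponential factor reduces to $1$, and since $\partial_{t}f^{*}=0$, the equation becomes $\partial_{y}f^{*}(\sigma,y)+y^{\gamma}f^{*}(\sigma,y)=0$. The chosen ansatz is exactly the solution of this first-order ODE in $y$: differentiating $e^{-y^{\gamma+1}/(\gamma+1)}$ produces the factor $-y^{\gamma}$ and cancels the transport term identically.

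Next I would check the non-local boundary condition $f^{*}(\sigma,0)=\int_{0}^{\infty}y^{\gamma}e^{(\gamma+1)\beta\sigma m(t)}f^{*}(-\sigma,y)\,dy$. Again with $m=0$ the exponential is $1$ and the $\sigma$-symmetry of $f^{*}$ means the right-hand side equals $\frac{1}{2\Lambda}\int_{0}^{\infty}y^{\gamma}e^{-y^{\gamma+1}/(\gamma+1)}\,dy$. The substitution $u=y^{\gamma+1}/(\gamma+1)$, with $du=y^{\gamma}\,dy$, reduces this integral to $\int_{0}^{\infty}e^{-u}\,du=1$, so the right-hand side equals $\frac{1}{2\Lambda}=f^{*}(\sigma,0)$. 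This is the step I expect to be the main source of small errors if any arise, because it is where the specific choice of tail $\varphi(t)\propto e^{-t^{\gamma+1}/(\gamma+1)}$ is used in an essential way; the derivative of the exponent must match the rate $y^{\gamma}$ appearing in the boundary integrand for the fixed point identity to close.

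Finally, the two remaining conditions follow by direct computation. The magnetization condition $m=\int_{0}^{\infty}[f^{*}(1,y)-f^{*}(-1,y)]\,dy$ is trivially $0$ by $\sigma$-symmetry, consistent with the assumption $m=0$ under which the PDE was evaluated. The normalization $\int_{0}^{\infty}[f^{*}(1,y)+f^{*}(-1,y)]\,dy=\frac{1}{\Lambda}\int_{0}^{\infty}e^{-y^{\gamma+1}/(\gamma+1)}\,dy=1$ holds by the very definition of $\Lambda$. Collecting these four verifications concludes the proof.
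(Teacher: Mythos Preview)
Your proposal is correct and follows essentially the same approach as the paper: the paper sets $m=0$, solves the resulting ODE $\partial_y f + y^\gamma f = 0$, and then uses the integral conditions to fix the constants, while you directly verify that $f^*$ satisfies each condition of the system. The only cosmetic difference is that the paper presents it as a derivation and you as a verification, but the underlying checks (ODE, boundary condition via the integral $\int_0^\infty y^\gamma e^{-y^{\gamma+1}/(\gamma+1)}\,dy = 1$, normalization, and $m=0$ by symmetry) are identical.
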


\begin{rem}
\label{reversibility_renewal}
Let $g^*(\sigma)$ be the marginal of $f^*(\sigma,y)$ with respect to the first coordinate. Then, $g^*(\sigma)$ is a stationary reversible distribution for the limit renewal process $(\sigma(t))_{t \geq 0}$. Indeed, by choosing $\sigma(0) \sim g^*$, $g^*(1) = g^*(-1) = \frac{1}{2}$, we have that $m(t) \equiv 0$ and $(\sigma(t))_{t \geq 0}$ is a renewal process with interarrival times $\tau$ such that $\mathbb{P}(\tau > t) \propto e^{-\frac{t^{\gamma+1}}{\gamma+1}}$ independently of the value of $\sigma$, so its law is invariant by time reversal.
\end{rem}

The linearization of the operator associated to Sys.\! \eqref{eqn:kfp_renewal} around the neutral equilibrium \eqref{eqn:stat_sol}, yields the following eigen-system
\begin{equation}
\label{eqn:linear_renewal}
\begin{cases}
\frac{\partial}{\partial y} g(\sigma,y) + y^\gamma g(\sigma,y) - \frac{\beta \sigma k(\gamma + 1)}{2\Lambda}y^\gamma  e^{-\frac{y^{\gamma +1}}{\gamma +1}} = -\lambda g(\sigma,y),\\
g(\sigma,0) = \frac{\beta \sigma k (\gamma + 1)}{2\Lambda} + \int_0^\infty g(-\sigma,y) y^\gamma dy,\\
\int_0^\infty [g(\sigma,y) + g(-\sigma,y)]dy = 0, \ \ \ (\sigma,y) \in \left\{-1,1\right\} \times \mathbb{R}^+,
\end{cases}
\end{equation}
where $k = 2\int_0^\infty g(1,y) dy$, and $\Lambda = \int_0^\infty e^{-\frac{y^{\gamma +1}}{\gamma + 1}} dy$. For a formal derivation see Subsection \ref{linearized_stationary}.
We work out the computations of the discrete spectrum of the linearized operator for the two cases $\gamma = 1$, $\gamma = 2$. 

\subsubsection{Case $\gamma = 1$}
In this case, $\Lambda = \sqrt{\frac{\pi}{2}}$, and the eigen-system \eqref{eqn:linear_renewal} becomes
\begin{equation}
\label{eqn:gamma_1}
\begin{cases}
\frac{\partial}{\partial y} g(\sigma,y) + y g(\sigma,y) + \lambda g(\sigma,y)  = \beta \sigma k \left(\sqrt{\frac{\pi}{2}}\right)^{-1}y e^{-\frac{y^2}{2}} \\
g(\sigma,0) = \beta \sigma k \left(\sqrt{\frac{\pi}{2}}\right)^{-1} + \int_0^{\infty} y g(-\sigma,y)dy,\\
\int_0^{\infty} [g(\sigma,y) + g(-\sigma,y)]dy = 0,
\end{cases}
\end{equation}
where $k = 2\int_0^\infty g(1,y)dy$. 
\begin{prop}
\label{gamma1_prop}
The solutions in $\lambda \in \mathbb{C}$ to \eqref{eqn:gamma_1} are the zeros of the holomorphic function
\begin{equation}
\label{eqn:holo_g1}
H_{\beta,1}(\lambda) := H_1(\lambda)\left[-4\beta - \lambda^3 \sqrt{\frac{\pi}{2}}\right] + \sqrt{2\pi}\lambda^2 - 4\beta \lambda + 2\beta \sqrt{2\pi},
\end{equation}
with 
\begin{equation}
\label{eqn:expression_h1g1}
H_1(\lambda) := \int_0^\infty e^{-\frac{y^2}{2}}e^{-\lambda y}.
\end{equation}
Moreover, it holds
\begin{equation}
\label{eqn:h1_g1}
H_1(\lambda) = \sqrt{\frac{\pi}{2}}\sum_{m=0}^\infty \frac{\lambda^{2m}}{(2m)!!} - \lambda \sum_{m=0}^\infty \frac{(2\lambda)^{2m}m!}{(2m + 1)!}\frac{1}{2^m}.
\end{equation}
\end{prop}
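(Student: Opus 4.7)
The strategy is to treat the first line of \eqref{eqn:gamma_1}, for each fixed $\sigma \in \{-1,1\}$, as a linear first-order inhomogeneous ODE in $y$ whose source is proportional to the scalar $k$. Using $e^{y^2/2+\lambda y}$ as integrating factor and integrating from $0$ to $y$ yields the explicit representation
\[
g(\sigma,y) = e^{-y^2/2-\lambda y}\Bigl[g(\sigma,0) + \beta\sigma k \sqrt{2/\pi}\, U(y)\Bigr], \qquad U(y) := \int_0^y s\, e^{\lambda s}\, ds,
\]
so that $g(\sigma,\cdot)$ is completely determined by the three scalars $a := g(1,0)$, $b := g(-1,0)$ and $k$. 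Substituting this representation into the two boundary conditions at $y=0$ (one for $\sigma=1$, one for $\sigma=-1$) and subtracting forces $b = -a$; the zero-mass condition is then automatic.

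Two equations in $(a,k)$ then remain, both homogeneous. The boundary equation for $\sigma=1$ becomes
\[
a\bigl(1 + I_1(\lambda)\bigr) = \beta k\sqrt{2/\pi}\,\bigl(1 - I_2(\lambda)\bigr),
\]
while the self-consistency $k = 2\int_0^\infty g(1,y)\,dy$ becomes
\[
k\bigl(1 - 2\beta\sqrt{2/\pi}\, J(\lambda)\bigr) = 2\, a\, H_1(\lambda),
\]
with $I_1(\lambda) := \int_0^\infty y e^{-y^2/2-\lambda y}\,dy$, $I_2(\lambda) := \int_0^\infty y e^{-y^2/2-\lambda y}U(y)\,dy$ and $J(\lambda) := \int_0^\infty e^{-y^2/2-\lambda y}U(y)\,dy$. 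The solutions $\lambda \in \mathbb{C}$ of \eqref{eqn:gamma_1} are precisely those for which this $2 \times 2$ system admits a non-trivial solution, i.e.\ the zeros of its determinant.

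To recognize that determinant as $H_{\beta,1}(\lambda)$, I use three integration-by-parts identities rooted in $ye^{-y^2/2} = -\partial_y e^{-y^2/2}$. The first gives $I_1(\lambda) = 1 - \lambda H_1(\lambda)$; the second, using $\partial_y U(y) = y e^{\lambda y}$, yields $I_2(\lambda) = 1 - \lambda J(\lambda)$; the third is the explicit evaluation $U(y) = \lambda^{-2}[(\lambda y - 1)e^{\lambda y} + 1]$, which upon integration against $e^{-y^2/2-\lambda y}$ gives $J(\lambda) = \lambda^{-2}\bigl[\lambda - \sqrt{\pi/2} + H_1(\lambda)\bigr]$. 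Substituting these into the expanded determinant, the two contributions proportional to $\lambda J(\lambda) H_1(\lambda)$ cancel identically, reducing the characteristic equation to
\[
2 - \lambda H_1(\lambda) - 4\beta\sqrt{2/\pi}\, J(\lambda) = 0;
\]
inserting the closed form of $J$ and multiplying through by $\lambda^2\sqrt{\pi/2}$ produces exactly $H_{\beta,1}(\lambda)=0$, with holomorphy of $H_{\beta,1}$ inherited from the integral representation of $H_1$.

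Finally, the series \eqref{eqn:h1_g1} follows from Taylor-expanding $e^{-\lambda y}$ inside $H_1(\lambda)$ and integrating term by term, using the Gaussian moments $\int_0^\infty y^{2n}e^{-y^2/2}\,dy = \sqrt{\pi/2}\,(2n-1)!!$ and $\int_0^\infty y^{2n+1}e^{-y^2/2}\,dy = 2^n n!$; separating even and odd powers of $\lambda$ and rewriting $(2n-1)!! = (2n)!/(2n)!!$ produces the stated expression. The main technical obstacle I anticipate is the bookkeeping in expanding and simplifying the determinant: noticing the exact cancellation of the $2\beta\sqrt{2/\pi}\,\lambda J(\lambda) H_1(\lambda)$ cross-terms is the step that makes everything collapse to the compact holomorphic form \eqref{eqn:holo_g1}.
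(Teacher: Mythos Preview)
Your argument is correct and follows essentially the same route as the paper: solve the first line of \eqref{eqn:gamma_1} via the integrating factor $e^{y^2/2+\lambda y}$, reduce everything to scalar relations among $g(1,0)$, $g(-1,0)$ and $k$, and simplify with the integration-by-parts identity $\int_0^\infty y\,e^{-y^2/2-\lambda y}\,dy = 1-\lambda H_1(\lambda)$ together with the explicit primitive $U(y)=\lambda^{-2}[(\lambda y-1)e^{\lambda y}+1]$; your packaging of the two scalar relations as a $2\times 2$ homogeneous system whose determinant collapses (after the $\lambda J H_1$ cancellation) to $2-\lambda H_1-4\beta\sqrt{2/\pi}\,J=0$ is just a tidier version of the paper's direct substitution of $k$ into the boundary relation. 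One small slip: it is \emph{adding} the two boundary equations (or, equivalently, invoking the zero-mass condition directly, as the paper does) that yields $(a+b)(1-I_1)=0$ and hence $b=-a$, not subtracting---but this does not affect the rest of your computation.
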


\subsubsection{Case $\gamma = 2$}
In this case the eigen-system is given by
\begin{equation}
\label{eqn:gamma_2}
\begin{cases}
\frac{\partial}{\partial y} g(\sigma,y) + y^2 g(\sigma,y) +\lambda g(\sigma,y) =  \frac{3}{2\Lambda}\beta \sigma k y^2 e^{-\frac{y^3}{3}} ,\\
g(\sigma,0) = \frac{3}{2\Lambda} \beta \sigma k  + \int_0^{\infty} y^2 g(-\sigma,y)dy,\\
\int_0^{\infty} [g(\sigma,y) + g(-\sigma,y)]dy = 0,
\end{cases}
\end{equation}
where $\Lambda = \int_0^\infty e^{-\frac{y^3}{3}} = \frac{\Gamma\left(\frac{1}{3}\right)}{3^{2/3}}$, $k = 2 \int_0^{\infty} g(1,y) dy$, and $\Gamma(\cdot)$ is the Gamma function.
\begin{prop}
\label{gamma2_prop}
The solutions in $\lambda \in \mathbb{C}$ to \eqref{eqn:gamma_2} are the zeros of the holomorphic function
\begin{equation}
\label{eqn:holo_g2}
\begin{aligned}
H_{\beta,2}&(\lambda):= H_2(\lambda)\Bigg[12\beta - \lambda^4 \Lambda + 6\beta \lambda \Lambda - 6\beta \lambda 3^{1/3}\Gamma(4/3) + 3\beta \lambda^2 3^{2/3}\Gamma(5/3)\\
&- 6\beta\lambda^2\frac{\Gamma(2/3)}{3^{1/3}}\Bigg]+\Bigg[2\Lambda \lambda^3 - 12\beta\Lambda + 12\beta\frac{\Gamma(2/3)}{3^{1/3}}\lambda - 6\beta \lambda^2 \Bigg],
\end{aligned}
\end{equation}
with 
\begin{equation}
\label{eqn:expression_h2g2}
H_2(\lambda) := \int_0^\infty e^{-\lambda y} e^{-\frac{y^3}{3}} dy.
\end{equation}
Moreover, it holds
\begin{equation}
\label{eqn:h2_g2}
H_2(\lambda) = \sum_{n=0}^{\infty} (-1)^n \frac{\lambda^n}{n!} 3^{\frac{1}{3}(n-2)}\Gamma\left(\frac{n+1}{3}\right).
\end{equation}
\end{prop}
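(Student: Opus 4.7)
The plan is to reduce the eigen-system \eqref{eqn:gamma_2} to a finite-dimensional homogeneous linear problem in a few scalar unknowns, whose solvability condition will turn out to be exactly $H_{\beta,2}(\lambda)=0$.

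First, for each fixed $\sigma\in\{-1,+1\}$, the first line of \eqref{eqn:gamma_2} is a scalar inhomogeneous linear first-order ODE in $y$. Applying the integrating factor $e^{\lambda y+y^{3}/3}$ I would obtain the explicit representation
\[
g(\sigma,y)=e^{-\lambda y-y^{3}/3}\left[g(\sigma,0)+\frac{3\beta\sigma k}{2\Lambda}\int_{0}^{y}s^{2}e^{\lambda s}\,ds\right],
\]
so that $g$ is parametrised by the three scalars $A:=g(1,0)$, $B:=g(-1,0)$ and $k$.

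Next I would expand the two boundary conditions, the normalisation constraint, and the self-consistency $k=2\int_{0}^{\infty}g(1,y)\,dy$ in these variables. This requires four auxiliary integrals in $\lambda$: $\int_{0}^{\infty}e^{-\lambda y-y^{3}/3}\,dy$, $\int_{0}^{\infty}y^{2}e^{-\lambda y-y^{3}/3}\,dy$, and their ``primitive-twisted'' analogues involving $\int_{0}^{y}s^{2}e^{\lambda s}\,ds$. The first is $H_{2}(\lambda)$; the second, via the identity $\partial_{y}[e^{-\lambda y-y^{3}/3}]=-(\lambda+y^{2})e^{-\lambda y-y^{3}/3}$, equals $1-\lambda H_{2}(\lambda)$; the remaining two I would reduce by using the explicit antiderivative $\int s^{2}e^{\lambda s}\,ds=e^{\lambda s}(s^{2}/\lambda-2s/\lambda^{2}+2/\lambda^{3})$ down to the Gamma-moments $\int_{0}^{\infty}y^{n}e^{-y^{3}/3}\,dy=3^{(n-2)/3}\Gamma((n+1)/3)$ for $n=0,1,2,3,4$. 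These produce exactly the constants $\Lambda$, $\Gamma(2/3)/3^{1/3}$, $3^{1/3}\Gamma(4/3)$ and $3^{2/3}\Gamma(5/3)$ appearing in \eqref{eqn:holo_g2}.

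Plugging back in, the normalisation reduces to $(A+B)H_{2}(\lambda)=0$, so (off the zeros of $H_{2}$) we may take $A=-B$; by the induced $\sigma\mapsto-\sigma$ antisymmetry the two boundary conditions collapse to a single scalar relation between $A$ and $k$, and the definition of $k$ gives a second scalar relation. What remains is a homogeneous $2\times 2$ linear system in $(A,k)$; non-triviality of the eigenfunction amounts to vanishing of its determinant, and multiplying through by $\lambda^{3}$ rearranges exactly into $H_{\beta,2}(\lambda)=0$. The series expansion \eqref{eqn:h2_g2} then follows by substituting $e^{-\lambda y}=\sum_{n\geq 0}(-\lambda y)^{n}/n!$ into \eqref{eqn:expression_h2g2} and applying the Gamma-moment formula term by term; entirety of $H_{2}$, and hence of $H_{\beta,2}$, is immediate from the super-exponential decay of $\Gamma((n+1)/3)/n!$.

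The main obstacle I expect is bookkeeping: the twisted integrals generate a proliferation of Gamma-function constants that must be regrouped carefully to recognise the coefficients in \eqref{eqn:holo_g2}, and several apparent $H_{2}^{2}/\lambda^{2}$ terms should cancel, which is the cleanest check that the computation is correct. A minor side issue is the exceptional locus $H_{2}(\lambda)=0$, on which the reduction $A=-B$ is not forced; there one has to verify by a short direct argument that any admissible eigenfunction still requires $H_{\beta,2}(\lambda)=0$, so that the characterisation of the spectrum as the zero set of $H_{\beta,2}$ is not missing any points.
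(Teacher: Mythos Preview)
Your proposal is correct and follows essentially the same approach as the paper: solve the first-order ODE via the integrating factor $e^{\lambda y+y^{3}/3}$, use the explicit antiderivative of $s^{2}e^{\lambda s}$ to reduce everything to $H_{2}(\lambda)$ and the Gamma-moments $\int_{0}^{\infty}y^{n}e^{-y^{3}/3}dy$, exploit the normalisation to obtain the antisymmetry $g(1,0)=-g(-1,0)$, and then combine the boundary condition with the self-consistency of $k$ to obtain a homogeneous relation whose solvability is $H_{\beta,2}(\lambda)=0$. The paper carries this out by solving for $k$ in terms of $g(1,0)$ and substituting, rather than writing down the $2\times2$ determinant explicitly, but this is the same computation; your anticipated cancellation of the $H_{2}^{2}$ terms is exactly what happens, and your treatment of the exceptional locus $H_{2}(\lambda)=0$ is a point the paper simply passes over.
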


\section{Numerical results}
\label{numerics}
\subsection{Numerical evidence on the eigenvalues}
\label{numerical_evidence_eigen}
We studied numerically the two eigenvalues equations 
\begin{equation}
\label{eqn:g1_eigenvalues}
H_{\beta,1}(\lambda) = 0,
\end{equation}
and 
\begin{equation}
\label{eqn:g2_eigenvalues}
H_{\beta,2}(\lambda) = 0.
\end{equation}
We used a numerical root finding built-in function of the software \texttt{Mathematica}, specifically \texttt{FindRoot}, starting the search from different initial points of the complex plane and from different values of $\beta$. Here we report the results:
\vspace{0.2cm}

\begin{itemize}
\item Case $\gamma = 1$:
\vspace{0.15cm}
 \begin{itemize}
 \item[(1.1)] we find two conjugate purely imaginary solutions to \eqref{eqn:g1_eigenvalues}, for $\lambda = \pm \lambda_c(1):= \pm i (1.171)$ and 
 \begin{equation}
 \label{eqn:beta_c(1)}
 \beta = \beta_c(1) := 0.769;
 \end{equation}
 \item[(1.2)]  iterating the search around $(\beta_c(1),\lambda_c(1))$, the resulting complex eigenvalue goes from having a negative real part for $\beta < \beta_c(1)$ to a positive real part for $\beta > \beta_c(1)$;
 \item[(1.3)] no other purely immaginary solution $\lambda = \pm i x$ is found for $0 \leq x \leq 500$ and $0 \leq \beta \leq 20 $;
 \item[(1.4)] for $\beta < \beta_c(1)$ all the eigenvalues $\lambda = ix + y$ are such that $y < 0$. This was verified for $-100\leq x \leq 100$, $-100\leq y \leq 100$.
 \end{itemize}

\vspace{0.3cm}
\item Case $\gamma = 2$:
\vspace{0.15cm}
 \begin{itemize}
 \item[(2.1)] we find two conjugate purely imaginary solutions to \eqref{eqn:g2_eigenvalues}, for $\lambda = \pm\lambda_c(2) := \pm i(1.978)$ and 
 \begin{equation}
 \label{eqn:beta_c(2)}
 \beta = \beta_c(2):= 0.362;
 \end{equation}
 \item[(2.2)] analogous to (1.2);
 \item[(2.3)] analogous to (1.3), verified for $0 \leq x \leq 10$ and $0\leq \beta \leq 5$;
 \item[(2.4)] analogous to (1.4), verified for $-25 \leq x \leq 25$, $-25 \leq y \leq 25$;
 \item[(2.5)] apart from being sensibly slower, the numerical root finding for $\gamma = 2$ suffers from numerical instability issues. This is why we were able to check the results for much smaller intervals in this case.
 \end{itemize} 
\end{itemize}

\subsection{Finite particle system simulations}
\label{finite_particle}
We made several simulations of the particle system ($N$ large but finite, $N = 1500$) for $\gamma = 1, 2$, which seem in accordance with the above numerical results on the eigenvalues (compare with \eqref{eqn:beta_c(1)} and \eqref{eqn:beta_c(2)}).
This is a description of the evidences:
\begin{itemize}
\item For $\beta$ small the system is stable, in particular the magnetization goes to zero regardless of the initial datum (Figure \ref{stable_renewal}).
\item There is a critical $\beta$ (around $0.75$ for $\gamma = 1$, $0.35$ for $\gamma = 2$) above which the magnetization starts oscillating. Close to the critical points oscillations (Figure \ref{oscillating_renewal}) do not look very regular (corrupted by noise?), but they soon become very regular if $\beta$ is not too close to the critical value. We also made joint plots of the magnetization with the empirical mean of the $y_i$'s (Figure \ref{cycle_renewal}). A limit cycle seems to emerge.
\item As $\beta$ increases, the amplitude of the oscillation of the magnetization increases (Figure \ref{cyclebigger_renewal}), while the period looks nearly constant. As $\beta$ crosses another critical value (around $1.3$ for $\gamma = 1$, $1.65$ for $\gamma = 2$) oscillations disappear, and the sistem magnetizes, i.e.\! the magnetization stabilizes to a non-zero value, actually close to $\pm 1$ (Figure \ref{magnetized_renewal}).
\item The oscillations are lasting for a wider interval of $\beta$'s for $\gamma =2$ (from $\beta \approx 0.35$ until $\beta \approx 1.65$) than $\gamma = 1$ (from $\beta \approx 0.75$ until $\beta \approx 1.3$). The period is instead smaller for $\gamma = 2$ than for $\gamma = 1$.
\item For both $\gamma = 1,2$, the appearance of the oscillations does not seem to depend on the initial data for the dynamics, suggesting the possible presence of a \textit{global} Hopf bifurcation. 
\end{itemize}

\begin{figure}%
    \centering
    \subfloat[$\beta = 0.25$, $\gamma = 1$]{{\includegraphics[width=5.2cm]{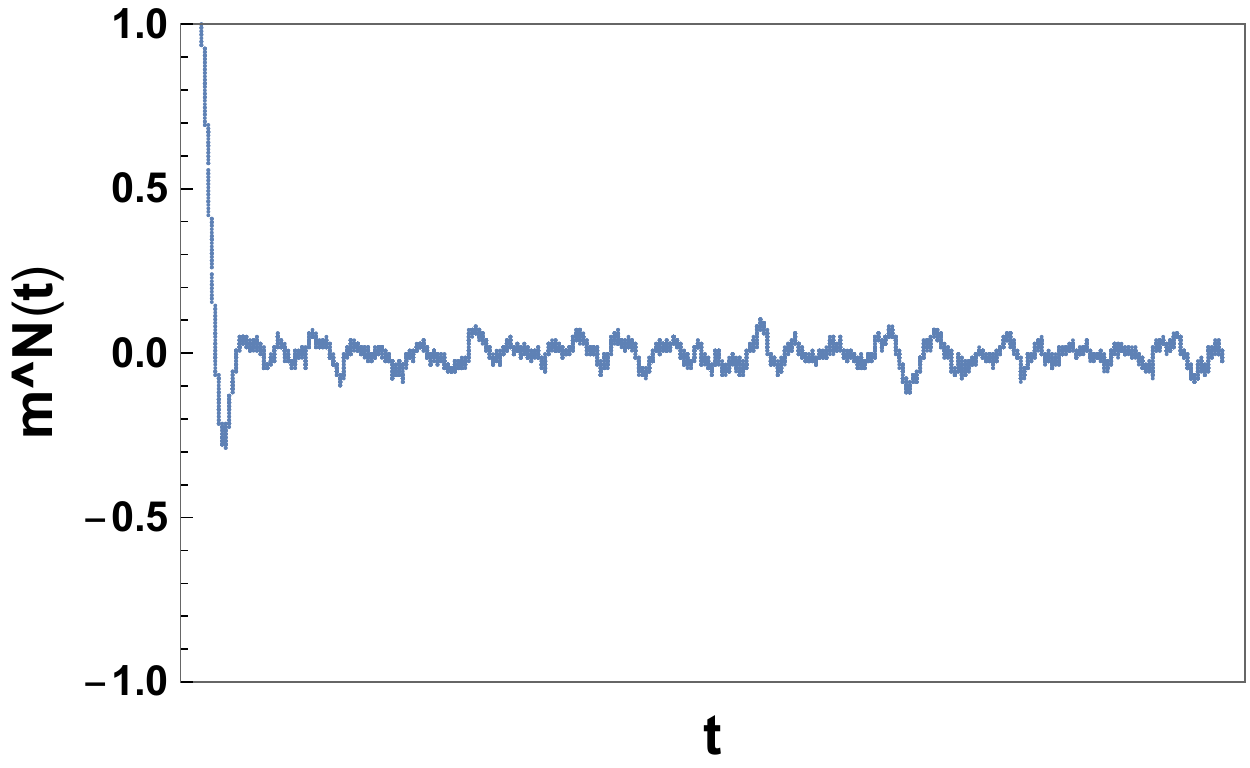} }}%
    \qquad
    \subfloat[$\beta = 0.1$, $\gamma = 2$]{{\includegraphics[width=5.2cm]{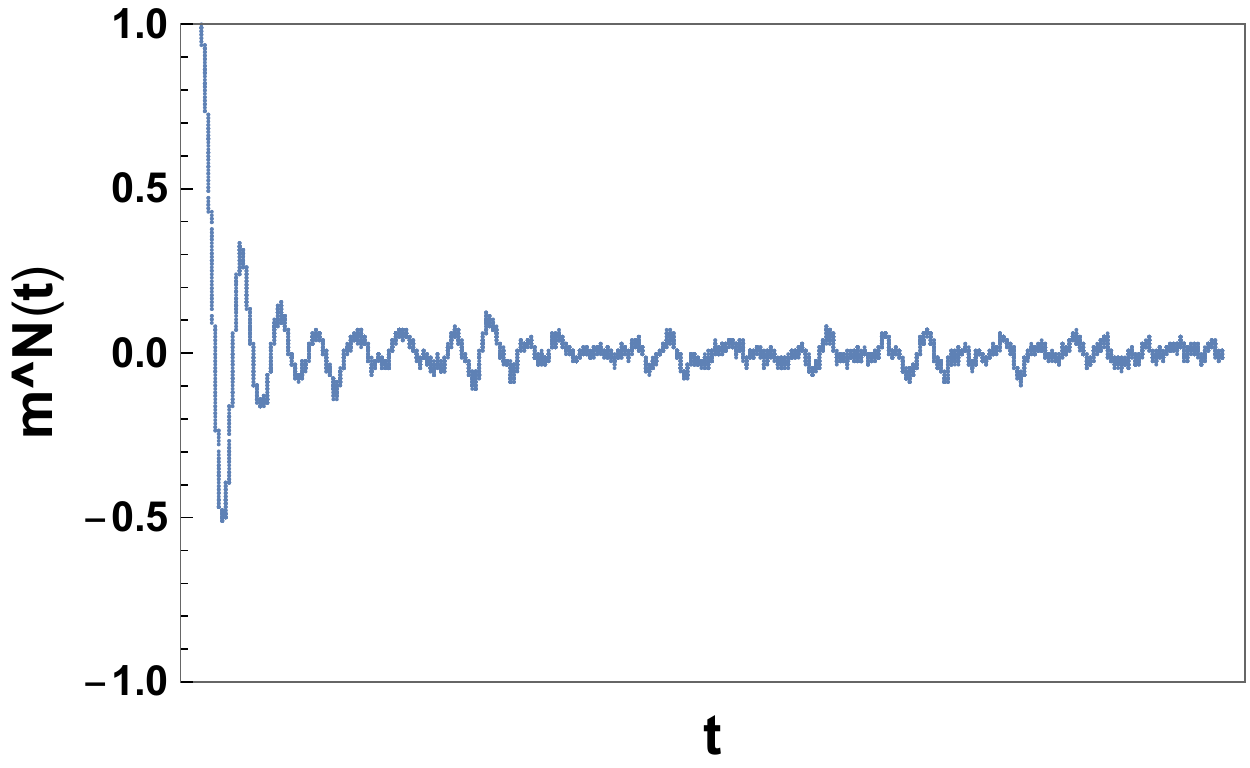} }}%
    \caption{Simulation of the finite particle system's dynamics for $\gamma=1$ (left) and $\gamma = 2$ (right), with number of spins $N =  1500$. We plot the empirical magnetization, with initial data $\sigma_i(0) = 1$ for every $i=1,\dots,N$.}   
    \label{stable_renewal}
\end{figure}

\begin{figure}%
    \centering
    \subfloat[$\beta = 0.75$, $\gamma = 1$]{{\includegraphics[width=5.2cm]{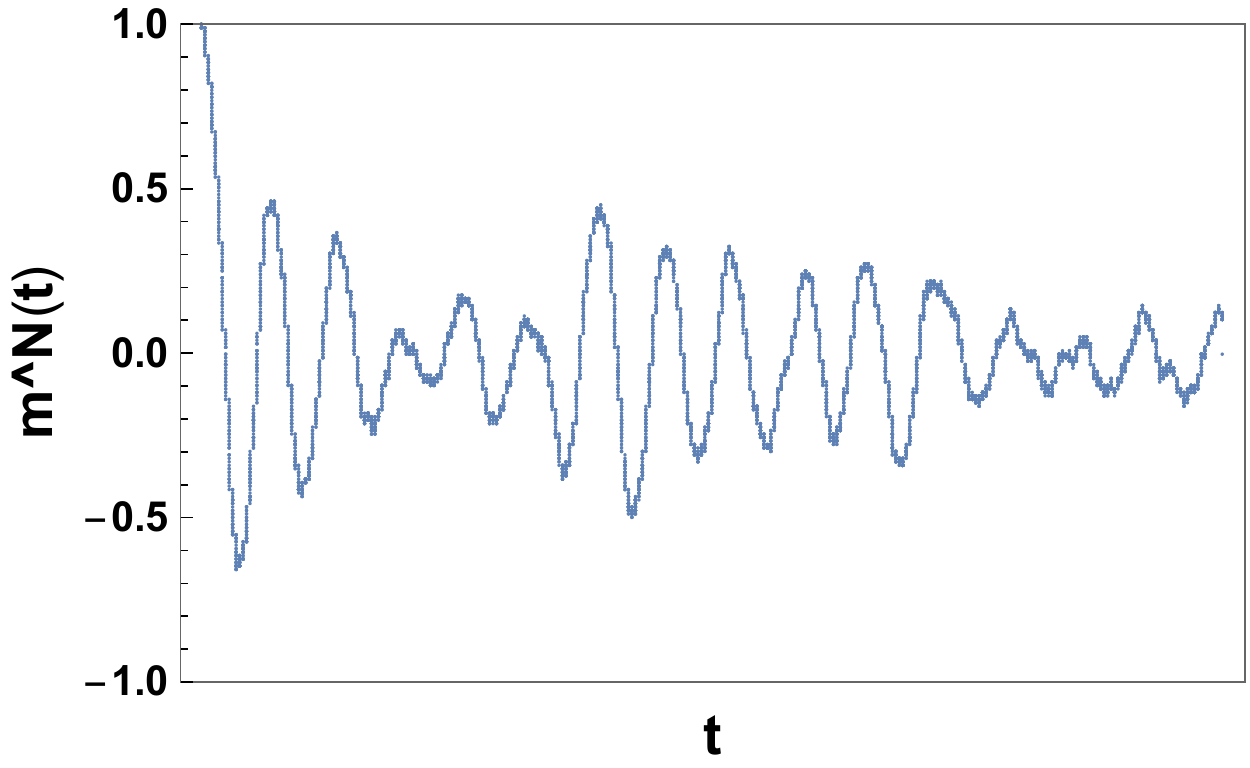} }}%
    \qquad
    \subfloat[$\beta = 0.35$, $\gamma = 2$]{{\includegraphics[width=5.2cm]{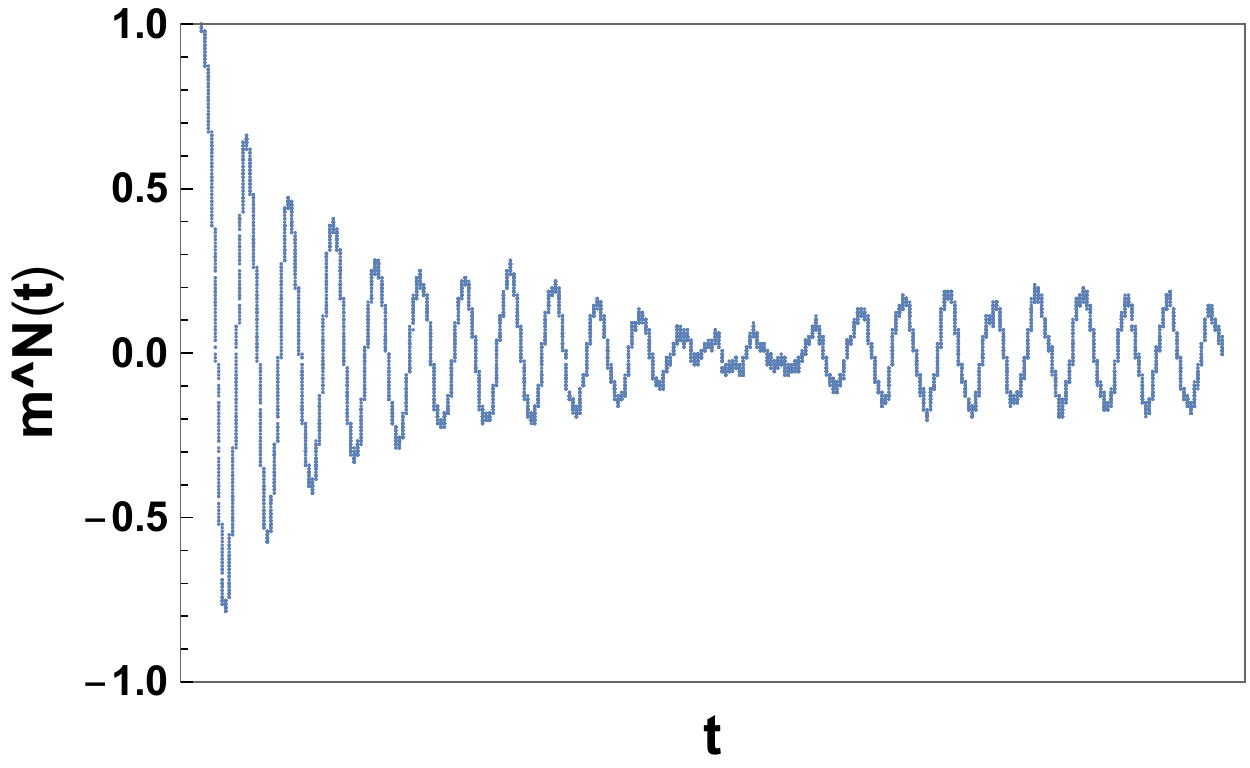} }}%
    \caption{Simulation of the finite particle system's dynamics for $\gamma=1$ (left) and $\gamma = 2$ (right), with number of spins $N =  1500$. We plot the empirical magnetization.}   
    \label{oscillating_renewal}
\end{figure}

\begin{figure}%
    \centering
    \subfloat[$\beta = 1.1$, $\gamma = 1$]{{\includegraphics[width=5.2cm]{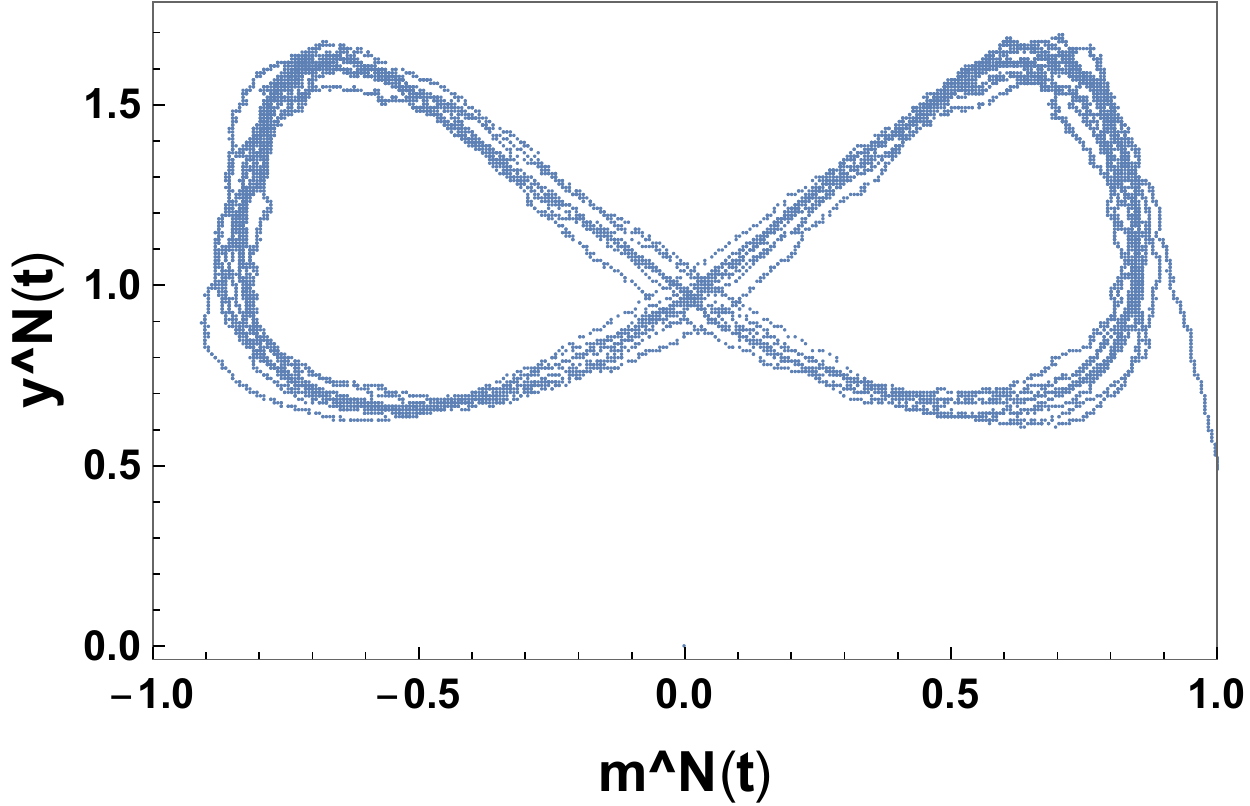} }}%
    \qquad
    \subfloat[$\beta = 0.6$, $\gamma = 2$]{{\includegraphics[width=5.2cm]{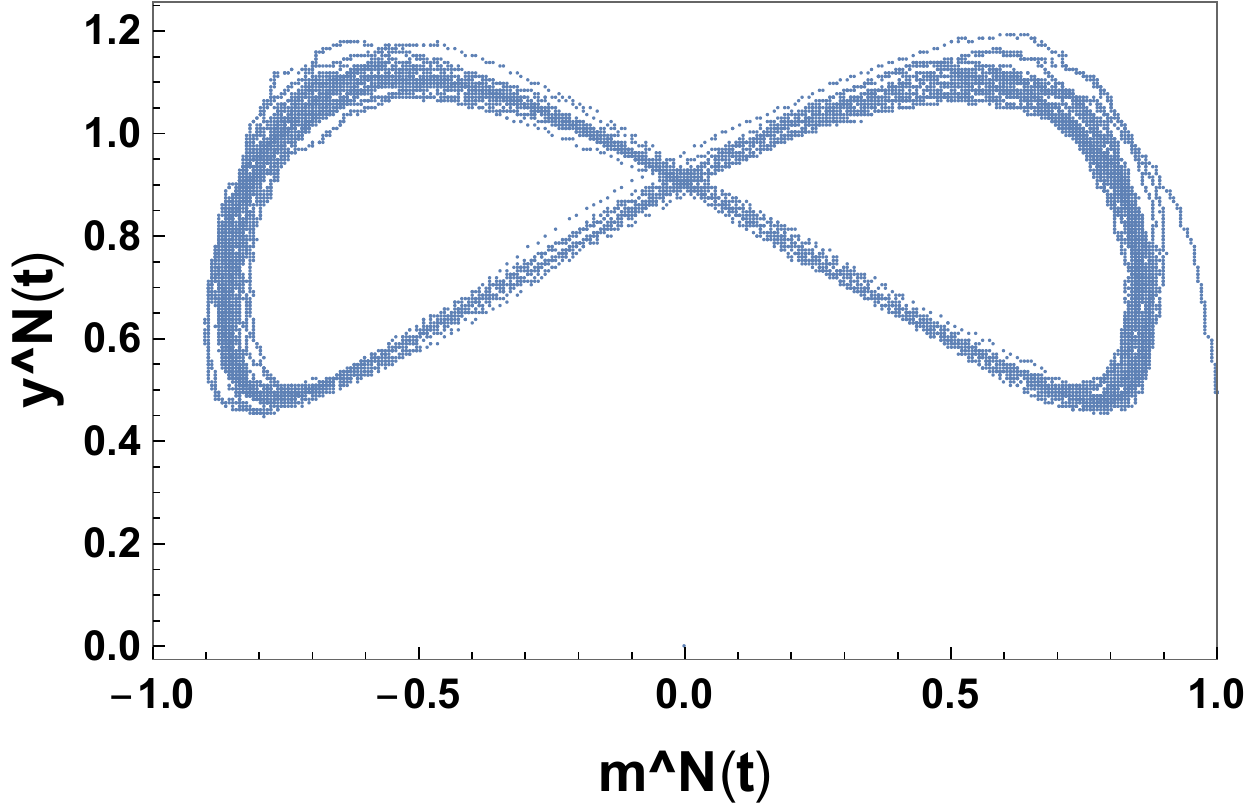} }}%
    \caption{Simulation of the finite particle system's dynamics for $\gamma=1$ (left) and $\gamma = 2$ (right), with number of spins $N =  1500$. We plot the empirical magnetization of the spins against the empirical mean of the $y_i$'s.}   
    \label{cycle_renewal}
\end{figure}

\begin{figure}%
    \centering
    \subfloat[$\beta = 1.2$, $\gamma = 1$]{{\includegraphics[width=5.2cm]{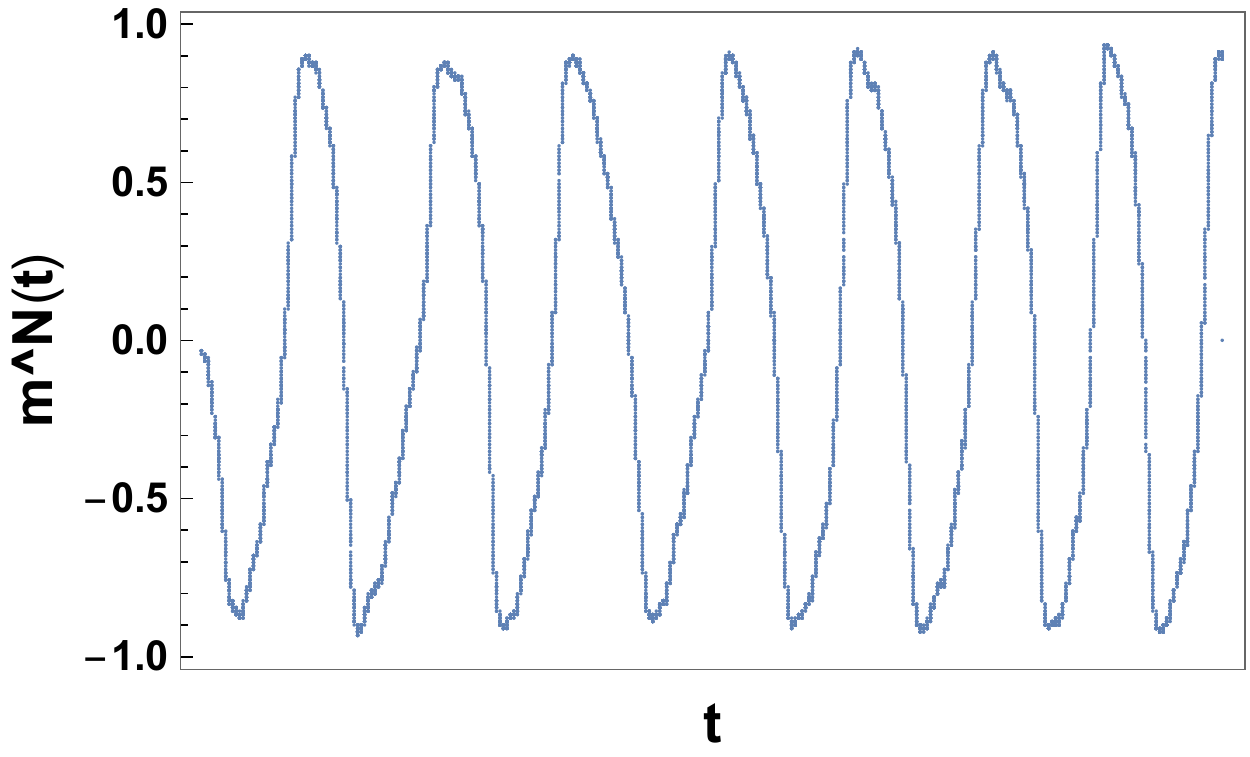} }}%
    \qquad
    \subfloat[$\beta = 0.9$, $\gamma = 2$]{{\includegraphics[width=5.2cm]{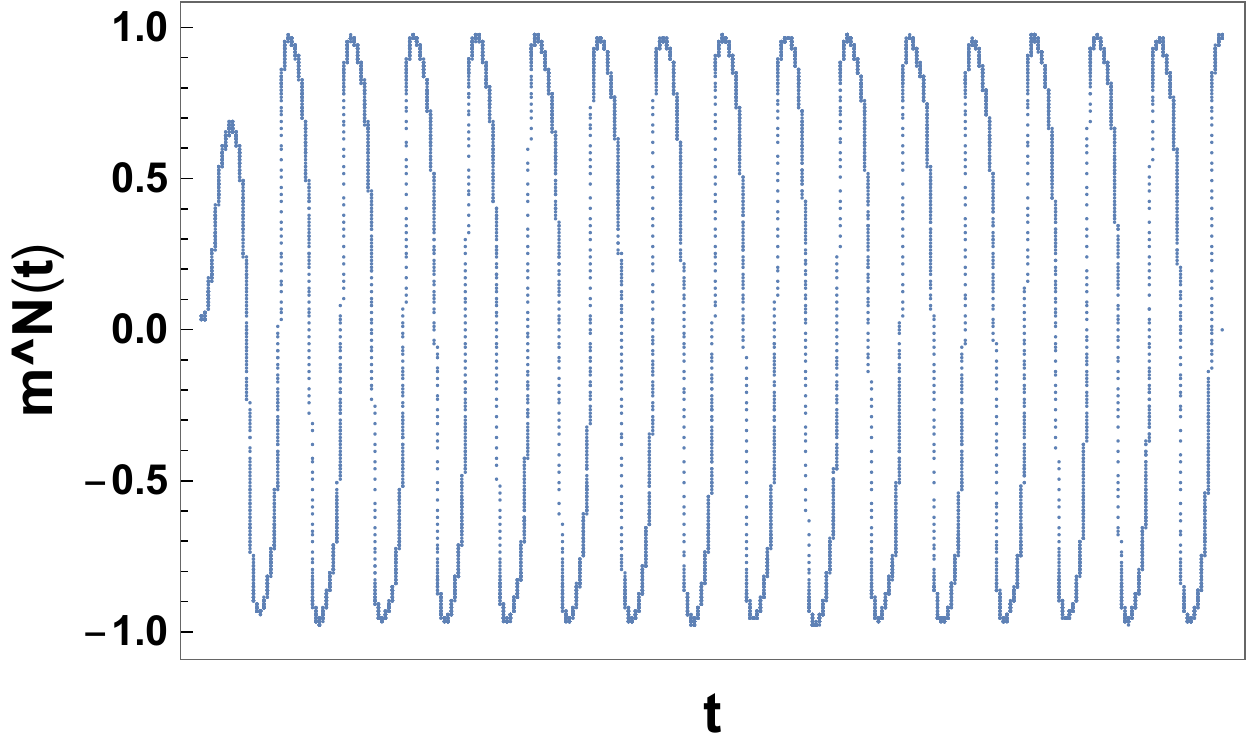} }}%
    \caption{Simulation of the finite particle system's dynamics for $\gamma=1$ (left) and $\gamma = 2$ (right), with number of spins $N =  1500$. We plot the empirical magnetization.}   
    \label{cyclebigger_renewal}
\end{figure}

\begin{figure}%
    \centering
    \subfloat[$\beta = 1.3$, $\gamma = 1$]{{\includegraphics[width=5.2cm]{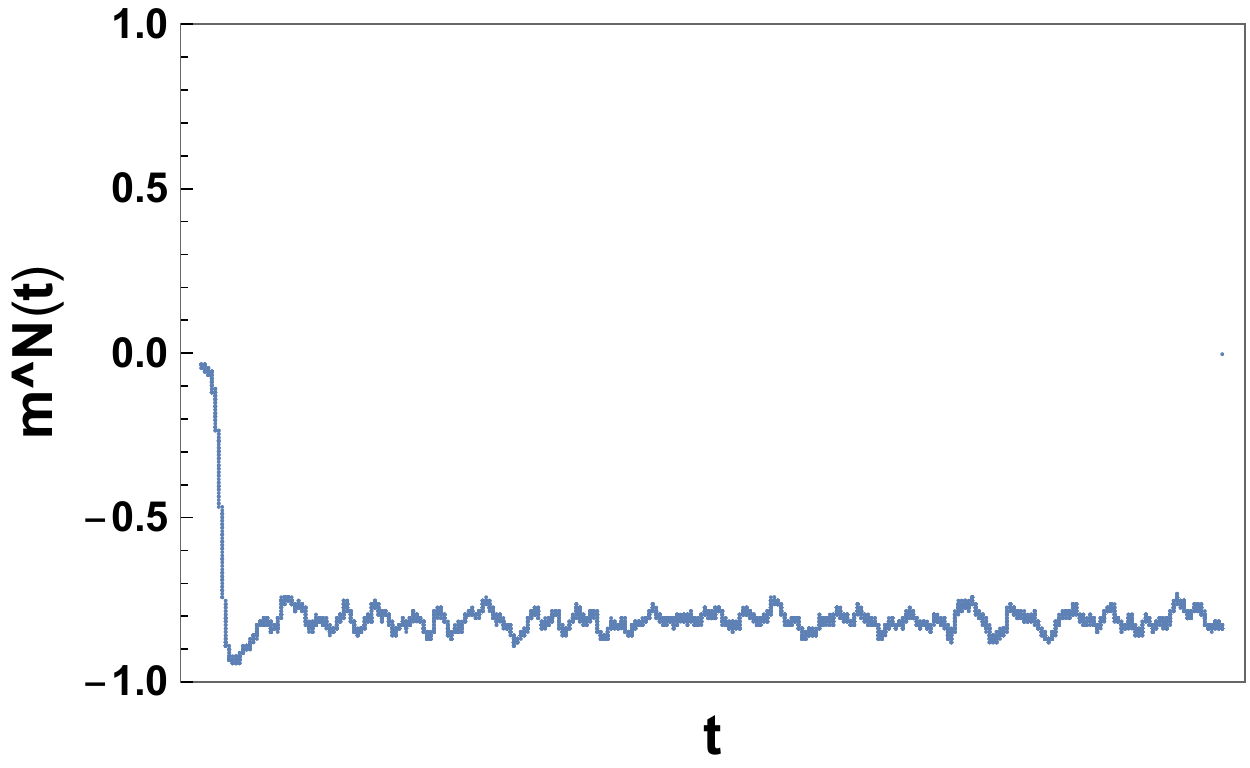} }}%
    \qquad
    \subfloat[$\beta = 1.65$, $\gamma = 2$]{{\includegraphics[width=5.2cm]{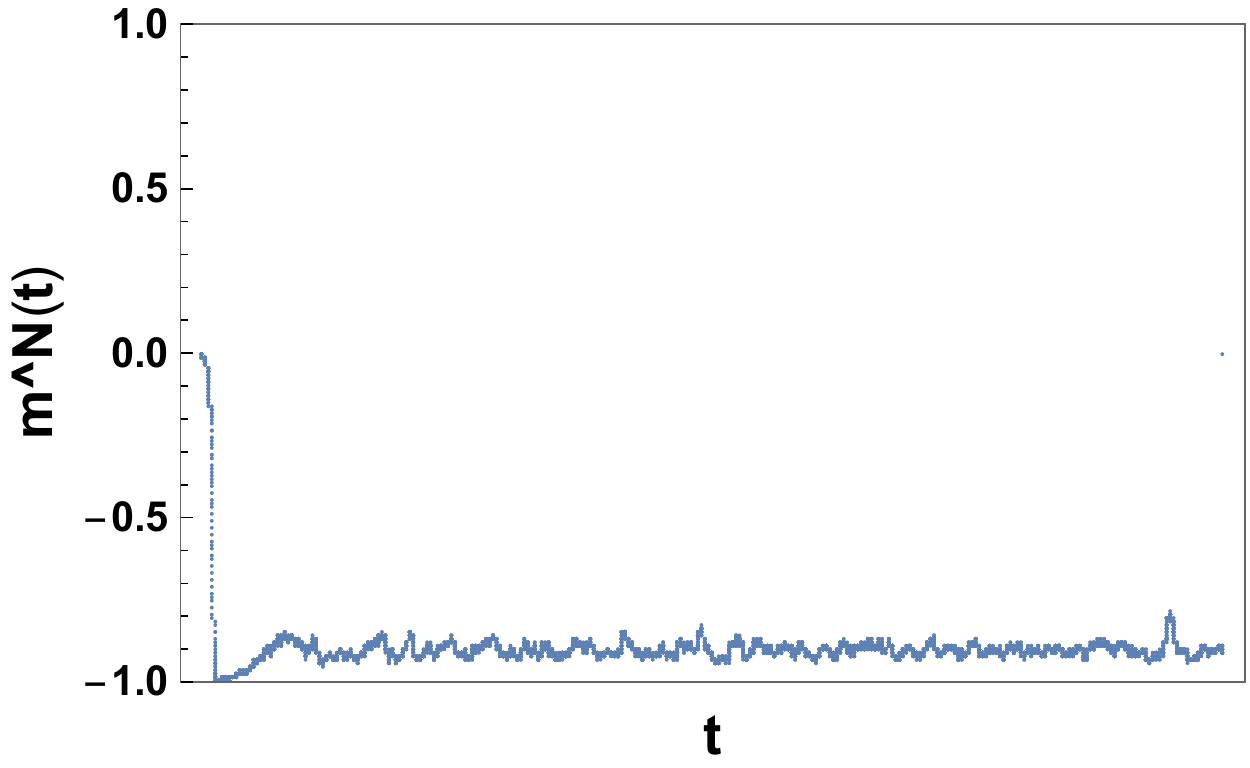} }}%
    \caption{Simulation of the finite particle system's dynamics for $\gamma=1$ (left) and $\gamma = 2$ (right), with number of spins $N =  1500$. We plot the empirical magnetization.}   
    \label{magnetized_renewal}
\end{figure}

\section{Proofs}
\label{proofs}
\subsection{Proof of Theorem \ref{prop_chaos_renewal}}
Here we prove rigorously a propagation of chaos property for the $N$-particle dynamics to its mean-field limit, for any $\gamma \in \mathbb{N}$. Actually, we establish the proofs for $\gamma = 1$, where the rates enjoy globally Lipschitz properties, and then we generalize them to any $\gamma\in \mathbb{N}$ in Remark \ref{gamma=2_chaos}. The generalization to non-Lipschitz rates is possible because of the a-priori bound on the variables $y_i$'s which, by definition, are such that $0 \leq y_i \leq T$, where $T < \infty$ is the final time horizon of the dynamics.
For the convenience of the reader, we write again the dynamics
\begin{equation}
\label{eqn:final_gamma_1}
\begin{cases}
(\sigma_i(t), y_i(t)) \mapsto (-\sigma_i(t),0),  \ \ \ \text{    with rate } \ \  y_i^{\gamma}(t) e^{-(\gamma + 1)\beta \sigma_i(t) m^N(t)}, \\
dy_i(t) = dt, \ \ \ \text{    otherwise,} 
\end{cases}
\end{equation}
and the mean-field version
\begin{equation}
\label{eqn:mf_final_1}
\begin{cases}
(\sigma(t), y(t)) \mapsto (-\sigma(t),0),  \ \ \ \text{    with rate } \ \ y^{\gamma}(t) e^{-(\gamma +1) \beta \sigma(t) m(t)}, \\
dy(t) = dt, \ \ \ \text{    otherwise,} 
\end{cases}
\end{equation}
with $m(t) = \mathbb{E}[\sigma(t)]$. 
We represent both the microscopic and the macroscopic model as solutions of certain stochastic differential equations driven by Poisson random measures, in order to apply the results in \cite{Graham}. As anticipated, in the proof we restrict to a finite interval of time $[0,T]$.

To begin with, let us fix a filtered probability space $\big((\Omega, \mathcal{F},\mathbb{P}),(\mathcal{F}_t)_{t \in [0,T]}\big)$ satisfying the usual hypotheses, rich enough to carry an inependent and identically distributed family $(\mathcal{N}_i)_{i \in \mathbb{N}}$ of stationary Poisson random measures $\mathcal{N}_i$ on $[0,T] \times \Xi$, with intensity measure $\nu$ on $\Xi := [0,+\infty)$ equal to the restriction of the Lebesgue measure onto $[0, +\infty)$.  
For any $N$, consider the system of Itô-Skorohod equations
\begin{equation}
\label{eqn:ito-sko}
\begin{cases}
\sigma_i(t) = \sigma_i(0) + \int_0^t \int_{\Xi}f_1(\sigma_i(s^-), \xi, m^N(s^-), y_i(s^-))\mathcal{N}_i(ds,d\xi),\\
y_i(t) = y_i(0) + t + \int_0^t \int_{\Xi}f_2(\sigma_i(s^-), \xi, m^N(s^-), y_i(s^-))\mathcal{N}_i(ds,d\xi),
\end{cases}
\end{equation}
and the corresponding limit non-linear \textit{reference particle}'s dynamics
\begin{equation}
\label{eqn:ito-sko_limit}
\begin{cases}
\sigma(t) = \sigma(0) + \int_0^t \int_{\Xi}f_1(\sigma(s^-), \xi, m(s^-), y(s^-))\mathcal{N}(ds,d\xi),\\
y(t) = y(0) + t + \int_0^t \int_{\Xi}f_2(\sigma(s^-), \xi, m(s^-), y(s^-))\mathcal{N}(ds,d\xi).
\end{cases}
\end{equation}
The functions $f_1, f_2 : \left\{-1,1\right\} \times \mathbb{R}^+ \times [-1,1] \times \mathbb{R}^+ \to \mathbb{R}$, modeling the jumps of the process, are given by
\begin{equation}
\label{eqn:jump_f12}
f_1(\sigma, \xi, m, y) := - 2\sigma \mathbbm{1}_{]0,\lambda[}(\xi), \quad f_2(\sigma, \xi,m,y):= -y\mathbbm{1}_{]0,\lambda[}(\xi), 
\end{equation}
with $\lambda := \lambda(\sigma,m,y)$ being the rate function $\lambda(\sigma,m,y) = y^{\gamma} e^{-(\gamma+1) \beta \sigma m}$.

\begin{prop}
\label{wp-itosko}
For $\gamma = 1$, Eqs.\! \eqref{eqn:ito-sko} and \eqref{eqn:ito-sko_limit} possess a unique strong solution for $t \in [0,T]$.
\end{prop}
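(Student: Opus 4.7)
The plan is to reduce the proposition to the pathwise strong existence and uniqueness theory for Poisson-driven jump SDEs developed in \cite{Graham}, the main point being that the peculiar geometry of the dynamics provides a cheap a priori confinement of the age variable $y$ that renders the otherwise unbounded rate $\lambda(\sigma, m, y) = y e^{-2\beta \sigma m}$ bounded and Lipschitz on the relevant compact domain.

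First I would establish the a priori bound: from the structure of \eqref{eqn:ito-sko}--\eqref{eqn:ito-sko_limit}, $y_i$ drifts at unit speed and is reset to zero at every spin-flip, so any candidate solution satisfies $0 \leq y_i(t) \leq y_i(0) + T$ on $[0,T]$. By a standard localization/truncation argument it therefore suffices to prove the proposition for initial data supported in $\{-1,1\} \times [0, R_0]$, so that the $y_i$'s are a priori confined to $[0, R]$ with $R := R_0 + T$. On this compact window and for $\gamma = 1$, $\lambda$ is bounded by $R e^{2\beta}$ and globally Lipschitz in $(m, y)$; using the elementary identity
\[
\bigl|\mathbbm{1}_{(0,\lambda_1)}(\xi) - \mathbbm{1}_{(0,\lambda_2)}(\xi)\bigr| = \mathbbm{1}_{(\lambda_1 \wedge \lambda_2,\, \lambda_1 \vee \lambda_2)}(\xi),
\]
this translates into the $L^1(\nu)$-Lipschitz estimates on $f_1, f_2$ of the form demanded by Graham's framework.

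For the $N$-particle system \eqref{eqn:ito-sko} the argument is then routine: the total jump intensity is bounded by $N R e^{2\beta}$, so the process is of finite activity on $[0,T]$, and strong existence and uniqueness follow by the canonical jump-by-jump construction driven by the Poisson measures $\mathcal{N}_i$. For the limit equation \eqref{eqn:ito-sko_limit} the non-linearity through $m(t) = \mathbb{E}[\sigma(t)]$ is handled by a Picard fixed-point argument on $C([0,T], [-1,1])$: given any $\bar m$ in this space, the equation obtained by freezing $m(t) = \bar m(t)$ in the rate is a genuine Poisson-driven SDE with bounded and Lipschitz coefficients, hence admits a unique strong solution $(\sigma^{\bar m}, y^{\bar m})$ by the argument just given, and this defines a map $\Psi(\bar m)(t) := \mathbb{E}[\sigma^{\bar m}(t)]$. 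Coupling the solutions associated to $\bar m_1$ and $\bar m_2$ through the same driving Poisson random measure and invoking the indicator identity together with the Lipschitz bound on $\lambda$, one arrives after Gronwall at an estimate of the form
\[
\|\Psi(\bar m_1) - \Psi(\bar m_2)\|_{\infty,[0,t]} \leq K \int_0^t \|\bar m_1 - \bar m_2\|_{\infty,[0,s]} \, ds,
\]
so that iteration of $\Psi$ converges to a unique fixed point, which is the desired strong solution.

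The main (moderate) obstacle is the careful bookkeeping in this coupling step, where one must account both for ``common'' jumps (when $\xi$ lies in $(0, \lambda_1 \wedge \lambda_2)$ and the spins were already unequal) and for ``exclusive'' jumps (when $\xi$ lies in the symmetric difference of the two acceptance intervals, creating a new discrepancy in either $\sigma$ or $y$). The indicator identity and the Lipschitz bound on $\lambda$, both ultimately resting on the a priori confinement of $y$, are precisely what is needed to close this accounting; correspondingly, the extension to $\gamma \geq 2$ claimed in Remark \ref{gamma=2_chaos} requires no new ingredient, since $y^\gamma e^{-(\gamma+1)\beta \sigma m}$ remains bounded and Lipschitz on $[0,R]$ with constants merely deteriorating by a factor of order $\gamma R^{\gamma-1}$.
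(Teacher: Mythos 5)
Your approach is correct and rests on exactly the same key observation as the paper's proof: the age variable is a priori confined on $[0,T]$, which makes the formally non-Lipschitz rate $\lambda(\sigma,m,y)=y\,e^{-2\beta\sigma m}$ bounded and Lipschitz on the relevant region, and this is precisely what delivers the $L^1(\nu)$-Lipschitz estimate on $f_1,f_2$ that Graham's framework requires. The structural difference is organizational rather than conceptual: once that estimate is verified, the paper simply invokes Theorems 1.2 and 2.1 of \cite{Graham}, whereas you unpack those theorems into a finite-activity jump-by-jump construction for the $N$-particle system and a Picard/Gronwall contraction for the nonlinear McKean--Vlasov limit, using the indicator identity and coupling through a common Poisson random measure; this is essentially the same Gronwall mechanism that underlies Graham's own proofs, so the two routes are equivalent in substance. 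One small point your version handles more carefully is the initial age: the paper asserts $y_i(t)\leq T$, which tacitly presumes $y_i(0)=0$, whereas your bound $y_i(t)\leq y_i(0)+T$ together with truncation to data supported in $\{-1,1\}\times[0,R_0]$ is what is actually needed when $\mu_0$ charges all of $\mathbb{R}^+$ (and, for the nonlinear equation, one should further note that the Gronwall constant involves a first moment of $y(0)$, a point both treatments leave implicit).
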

\begin{proof}
With the choices in \eqref{eqn:jump_f12}, the well-posedeness of Eqs.\! \eqref{eqn:ito-sko} and \eqref{eqn:ito-sko_limit} follows by Theorems 1.2 and 2.1 in \cite{Graham}. Indeed, even though the function $f_2$ is not globally Lipschitz continuous in $y$, the $L^1$ Lipschitz assumption of the theorem still holds, by noting that 
\begin{align*}
\int_{\Xi} &\Big|f_2(\sigma,\xi,m,y) - f_2(\tilde{\sigma},\xi,\tilde{m},\tilde{y})\Big| d\xi = \int_{\Xi} \Big| y \mathbbm{1}_{]0,\lambda(\sigma,m,y)[}(\xi) - \tilde{y}\mathbbm{1}_{]0,\lambda(\tilde{\sigma},\tilde{m},\tilde{y})[}(\xi)\Big|d\xi\\
& \leq |y| \big|\lambda(\sigma,m,y) - \lambda(\tilde{\sigma},\tilde{m},\tilde{y})\big| + \big|\lambda(\tilde{\sigma},\tilde{m},\tilde{y})\big| \big|y - \tilde{y}\big|\\
&\leq |y| \big[\big|\lambda(\sigma,m,y) - \lambda(\tilde{\sigma},\tilde{m},\tilde{y})\big| + \big|\lambda(\tilde{\sigma},\tilde{m},\tilde{y})\big||y - \tilde{y}|\big] \leq CT \big[|m- \tilde{m}| + |y - \tilde{y}| + |\sigma - \tilde{\sigma}|\big],
\end{align*}
where in the last step we have used that, by construction, the processes $y_i(t) \leq T$ for every $t \in [0,T]$, so that the rates are a priori bounded and the Lipschitz properties of $y e^{-2\beta \sigma m}$ for $(y,\sigma,m) \in \mathbb{R}^+ \times\left\{-1,1\right\} \times[-1,1]$.
\end{proof}

Now, define the empirical measures $\mu^N := \frac{1}{N}\sum_{i=1}^N \delta_{(\sigma_i,y_i)}$, and their evaluation along the paths of \eqref{eqn:ito-sko}, 
\begin{equation}
\label{empirical_meas_sko}
\mu^N_t := \frac{1}{N}\sum_{i=1}^N \delta_{(\sigma_i(t), y_i(t))}.
\end{equation}
The measures $(\mu_t^N)_{t \in [0,T]}$ can be viewed as random variables with values in $\mathcal{P}(D)$, the space of probability measures on $D$, where $D:= \mathcal{D}\big([0,T];\left\{-1,1\right\}\times \mathbb{R}^+\big)$ is the space of $\left\{-1,1\right\}\times \mathbb{R}^+$-valued càdlàg functions equipped with the Skorohod topology.

\begin{proof}[Proof of Theorem \ref{prop_chaos_renewal}]
Consider the i.i.d.\! processes $(\tilde{\sigma}_i(t), \tilde{y}_i(t))_{i=1,\dots,N}$, coupled with the $N$-particle dynamics $(\sigma_i(t),y_i(t))_{i=1,\dots,N}$,
\begin{equation}
\label{eqn:ito-sko_iid_limit}
\begin{cases}
\tilde{\sigma}_i(t) = \tilde{\sigma}_i(0) + \int_0^t \int_{\Xi}f_1(\tilde{\sigma}_i(s^-), \xi, m(s^-), \tilde{y}_i(s^-))\mathcal{N}_i(ds,d\xi),\\
\tilde{y}_i(t) = \tilde{y}_i(0) + t + \int_0^t \int_{\Xi}f_2(\tilde{\sigma}_i(s^-), \xi, m(s^-), \tilde{y}_i(s^-))\mathcal{N}_i(ds,d\xi),
\end{cases}
\end{equation}
with $m(t) = \mathbb{E}[\tilde{\sigma}_i(t)]$. Let $(\tilde{\mu}_t^N)_{t \in [0,T]}$ be the empirical measure associated to \eqref{eqn:ito-sko_iid_limit}. Clearly, $(\tilde{\mu}_t^N)_{t \in [0,T]} \to (\mu_t)_{t \in [0,T]}$ in the weak convergence sense (by a functional LLN, see \cite{Graham} for e.g.). We are thus left to show
$$
\bm{d_1}\Big(\text{Law}\big((\mu_t^N)_{t \in [0,T]}\big),\text{Law}\big((\tilde{\mu}_t^N)_{t \in [0,T]}\big)\Big) \xrightarrow{N \to +\infty} 0,
$$
with $\bm{d_1}$ being the $1$-Wasserstein distance (which metrizes the weak convergence of probability measures) on $\mathcal{P}(\mathcal{P}(D))$. 
Since
$$
\bm{d_1}\Big(\text{Law}\big((\mu_t^N)_{t \in [0,T]}\big),\text{Law}\big((\tilde{\mu}_t^N)_{t \in [0,T]}\big)\Big) \leq \frac{1}{N}\sum_{i=1}^N \mathbb{E}\big[d_{Sko}\big((\sigma_i,y_i),(\tilde{\sigma}_i,\tilde{y}_i)\big)\big],
$$ 
with $d_{Sko}$ the Skorohod metric on $D$, it is enough to show that 
\begin{equation}
\label{eqn:crucial_renewal_chaos}
\frac{1}{N}\sum_{i=1}^N \mathbb{E}\Bigg[\sup_{t \in [0,T]} \Big(|\sigma_i(t) - \tilde{\sigma}_i(t)| + | y_i(t) - \tilde{y}_i(t)|\Big)\Bigg] \xrightarrow{N \to +\infty} 0.
\end{equation}
For the proof of \eqref{eqn:crucial_renewal_chaos}, we estimate, using the estimates of Proposition \ref{wp-itosko} for $f_2$,
\begin{align*}
\mathbb{E}&\left[\sup_{s \in [0,t]}|y_i(s) - \tilde{y}_i(s)|\right] \leq \mathbb{E}\Big[|y_i(0) - \tilde{y}_i(0)|\Big] \\
& \hspace{1cm}+ C \int_0^t \mathbb{E}\Bigg[|m^N(s) - m(s)| + |y_i(s) - \tilde{y}_i(s)| + |\sigma_i(s) - \tilde{\sigma}_i(s)|\Bigg]ds \\
& \leq C  \int_0^t \mathbb{E}\Bigg[\sup_{r \in [0,s]}|m^N(r) - m(r)| + \sup_{r \in [0,s]}|y_i(r) - \tilde{y}_i(r)| + \sup_{r \in [0,s]} |\sigma_i(r) - \tilde{\sigma}_i(r)|\Bigg]ds + C(N),
\end{align*}
with $C(N) \xrightarrow{N \to +\infty} 0$ because of the chaoticity assumption on the initial datum. Similarly for the $\sigma_i$'s, using the the Lipschitz continuity of $f_1$, we obtain
\begin{align*}
\mathbb{E}&\left[\sup_{s \in [0,t]}|\sigma_i(s) - \tilde{\sigma}_i(s)|\right] \leq \mathbb{E}\Big[|\sigma_i(0) - \tilde{\sigma}_i(0)|\Big] \\
& + C \int_0^t \mathbb{E}\Bigg[|m^N(s) - m(s)| + |y_i(s) - \tilde{y}_i(s)| + |\sigma_i(s) - \tilde{\sigma}_i(s)|\Bigg]ds \\
& \leq C  \int_0^t \mathbb{E}\Bigg[\sup_{r \in [0,s]}|m^N(r) - m(r)| + \sup_{r \in [0,s]}|y_i(r) - \tilde{y}_i(r)| + \sup_{r \in [0,s]} |\sigma_i(r) - \tilde{\sigma}_i(r)|\Bigg]ds+ C(N). 
\end{align*}
Denoting $\tilde{m}^N (t):= \frac{1}{N}\sum_{i=1}^N \tilde{\sigma}_i(t)$, we find
\begin{align*}
\mathbb{E}&\Bigg[\sup_{s \in [0,t]} |m^N(s) - m(s)|\Bigg] \leq \mathbb{E}\Bigg[\sup_{s \in [0,t]} |m^N(s) - \tilde{m}^N(s)|\Bigg] + \mathbb{E}\Bigg[\sup_{s \in [0,t]} |\tilde{m}^N(s) - m(s)|\Bigg]\\
&= \frac{1}{N}\sum_{j=1}^N \mathbb{E}\Bigg[\sup_{s \in [0,t]}|\sigma_j(s) - \tilde{\sigma}_j(s)|\Bigg] +  \mathbb{E}\Bigg[\sup_{s \in [0,t]} |\tilde{m}^N(s) - m(s)|\Bigg]\\
& = \mathbb{E}\Bigg[\sup_{s \in [0,t]}|\sigma_i(s) - \tilde{\sigma}_i(s)|\Bigg] + C(N),
\end{align*}
with $C(N) \xrightarrow{N \to +\infty} 0$ because of the chaoticity of the i.i.d.\! processes $(\tilde{\sigma}_i(t), \tilde{y}_i(t))_{i=1,\dots,N}$, and where in the equalities we have used the exchangeability of the processes $(\sigma_i, \tilde{\sigma}_i)_{i=1,\dots,N}$.
Recollecting the estimates, we have shown, for any $t \in [0,T]$,
\begin{align*}
\frac{1}{N}&\sum_{i=1}^N\left\{\mathbb{E}\Bigg[\sup_{s \in [0,t]} |\sigma_i(s) - \tilde{\sigma}_i(s)|\Bigg]+ \mathbb{E}\Bigg[\sup_{s\in [0,t]}| y_i(s) -\tilde{y}_i(s)|\Bigg]\right\}\\
& \leq C(N) + \int_0^t \frac{1}{N}\sum_{i=1}^N\mathbb{E}\Bigg[\sup_{r \in [0,s]} |\sigma_i(r) - \tilde{\sigma}_i(r)|+ \sup_{r \in [0,s]}|y_i(r) - \tilde{y}_i(r)| \Bigg]ds,
\end{align*}
which by the Gronwall's lemma applied to 
$$
\varphi(t) := \frac{1}{N}\sum_{i=1}^N\left\{ \mathbb{E}\Bigg[\sup_{s \in [0,t]} |\sigma_i(s) - \tilde{\sigma}_i(s)|\Bigg]+ \mathbb{E}\Bigg[\sup_{s\in [0,t]}| y_i(s) -\tilde{y}_i(s)|\Bigg]\right\},
$$
implies \eqref{eqn:crucial_renewal_chaos}, because $\varphi(T)$ is an upper bound for the left hand side of \eqref{eqn:crucial_renewal_chaos}.
\end{proof}

\begin{rem}
\label{gamma=2_chaos}
Proposition \ref{wp-itosko} and the proof of Theorem \ref{prop_chaos_renewal} can be generalized to any $\gamma \in \mathbb{N}$. Indeed, the same Lipschitz $L^1$ estimates on the rates of Proposition \ref{wp-itosko} (used also in Theorem \ref{prop_chaos_renewal}) hold by estimating
\begin{align*}
\big|\lambda&(\sigma,m,y) - \lambda(\tilde{\sigma},\tilde{m},\tilde{y})\big|= \big| y^\gamma e^{-(\gamma + 1)\beta m \sigma} - \tilde{y}^\gamma e^{-(\gamma + 1)\beta\tilde{m}\tilde{\sigma}}\big|\\
& \leq \big|y^\gamma e^{-(\gamma + 1)\beta m \sigma} - \tilde{y}^\gamma e^{-(\gamma + 1)\beta m\sigma} \big| + \big| \tilde{y}^\gamma e^{-(\gamma + 1)\beta m \sigma} - \tilde{y}^\gamma e^{-(\gamma + 1)\beta \tilde{m}\tilde{\sigma}}\big| \\
& \leq \big|e^{-(\gamma + 1)\beta m \sigma}\big| \big|y^\gamma - \tilde{y}^\gamma\big| + \tilde{y}^\gamma \big|e^{-(\gamma + 1)\beta m \sigma} - e^{-(\gamma + 1)\beta \tilde{m}\tilde{\sigma}}\big| \\
& \leq C\big|y - \tilde{y}\big| \big|p(y,\tilde{y})\big| + \tilde{y}^\gamma\Big[C |m - \tilde{m}| + C|\sigma - \tilde{\sigma}|\Big]\leq C\Big[|y - \tilde{y}| + |m - \tilde{m}| + |\sigma - \tilde{\sigma}|\Big],
\end{align*}
with $p(y,\tilde{y})$ a polynomial of degree $\gamma - 1$. In the last step we have used the a priori bounds on $y \leq T$ to get $|p(y,\tilde{y})| \leq C(T)$ and the Lipschitz properties of $e^{-(\gamma+1)\beta m \sigma}$ for $(\sigma,m) \in \left\{-1,1\right\} \times [-1,1]$. 
\end{rem}

\subsection{Proofs of the local analysis of Subsection \ref{local_fp}}
In this section we address the proofs of the results illustrated in Subsection \ref{local_fp}. We start with a remark on the derivation of the Fokker-Planck mean-field limit equation:
\begin{rem}
\label{equazione_bordo}
While the other equations in \eqref{eqn:kfp_renewal} are derived in a standard way from the expression of the generator \eqref{eqn:mf_limit_renewal}, the boundary integral condition might need to be motivated. In words, it is a mass-balance between the spins that have just jumped (thus having $y = 0$). 
We reason heuristically by discretizing the state space $[0,+\infty)$ in small intervals of amplitude $\varepsilon$. The discretized version of $y(t)$ takes values in $\left\{n\varepsilon \ : \ n \in \mathbb{N}\right\}$. The associated generator is, for $n \in \mathbb{N}$ and $\sigma \in \left\{-1,1\right\}$,
\begin{equation*}
\mathcal{L}_\varepsilon f(\sigma,n\varepsilon) = \frac{1}{\varepsilon}\big[f(\sigma,(n+1)\varepsilon) - f(\sigma,n\varepsilon)\big] + (n\varepsilon)^\gamma e^{-(\gamma+1)\beta \sigma m(t)}\big[f(-\sigma,0) - f(\sigma,n\varepsilon)\big].
\end{equation*}
Denoting $f(t,\sigma,0)$ the density of the discretized process in $(\sigma,0)$ at time $t$, it follows from the expression of $\mathcal{L}_\varepsilon$,
\begin{align*}
\frac{d}{dt}f(t,\sigma,0) = \sum_{n \in \mathbb{N}}(n\varepsilon)^\gamma e^{-(\gamma+1)\beta \sigma m(t)} f(t,-\sigma,n\varepsilon) - \frac{1}{\varepsilon}f(t,\sigma,0),
\end{align*}
that is the discretized version of the integral condition in \eqref{eqn:mf_limit_renewal}. 
\end{rem}
\begin{proof}[Proof of Proposition \ref{neutral_eq}]
Setting $m= 0$ in Sys.\! \eqref{eqn:kfp_renewal}, the stationary version of the first equation becomes
\begin{equation}
\label{eqn:stat_renewal}
 \frac{\partial}{\partial y} f(\sigma,y) + y^\gamma f(\sigma,y) = 0,
\end{equation}
whose solution is of the form $f^*(\sigma,y) = c(\sigma) f(\sigma,0) e^{-\frac{y^{\gamma+1}}{\gamma+1}}$. Denoting $\Lambda := \int_{0}^{+\infty} e^{-\frac{y^{\gamma+1}}{\gamma+1}}$, it is easy to see that the integral conditions imply $c(\sigma) = c(-\sigma) = \frac{1}{\Lambda}$ and $f(\sigma,0) = f(-\sigma,0) = \frac{1}{2}$. 
\end{proof}

\subsubsection{Formal derivation of Sys.\! \eqref{eqn:linear_renewal}}
\label{linearized_stationary}
We now compute formally the linearization of the operator associated to Sys.\! \eqref{eqn:kfp_renewal} around the solution \eqref{eqn:stat_sol} with $m = 0$.
Namely, if we write the first equation in \eqref{eqn:kfp_renewal} in operator form
$$
\frac{\partial}{\partial t} f(t,\sigma,y) - \mathcal{L}_{\gamma}^{nl}f(t,\sigma,y) = 0,
$$
with $\mathcal{L}_{\gamma}^{nl}f(t,\sigma,y) :=  -\frac{\partial}{\partial y} f(t,\sigma,y) - y^\gamma e^{-(\gamma + 1)\beta \sigma m(t)} f(t,\sigma,y)$, we want to find the linearized version of the operator $\mathcal{L}_{\gamma}^{nl}$.
 
For the purpose, we express a generic stationary solution to \eqref{eqn:kfp_renewal} as 
$$
f(\sigma,y) = f^*(\sigma,y) + \varepsilon g(\sigma,y),
$$ 
imposing 
\begin{equation}
\label{eqn:integral_lin_renewal}
\int_{0}^{\infty} [g(1,y) + g(-1,y)]dy = 0,
\end{equation}
so that $\int_{0}^{\infty} [f(1,y) + f(-1,y)]dy = 1$ is satisfied. 
We also denote $m_f :=  \int_{0}^{\infty} [f(1,y) - f(-1,y)]dy$, which by the above consideration satisfies 
\begin{equation}
\label{eqn:def_of_k_renewal}
m_f = 2\varepsilon\int_0^{\infty} g(1,y) dy =: \varepsilon k.
\end{equation}
The stationary version of the first equation in \eqref{eqn:kfp_renewal} becomes
$$
\frac{\partial}{\partial y} f^*(\sigma,y) + \varepsilon \frac{\partial}{\partial y} g(\sigma,y) + y^\gamma e^{-\beta \sigma \varepsilon k (\gamma + 1)}[f^*(\sigma,y) + \varepsilon g(\sigma,y)] = 0.
$$
By expanding at the first order in $\varepsilon$ the term $e^{-\beta \sigma \varepsilon k (\gamma + 1)} \approx 1 - (\gamma + 1)\beta \sigma \varepsilon k$, and by considering only the resulting linear terms in $\varepsilon$, we get
$$
\frac{\partial}{\partial y} f^*(\sigma,y) + \varepsilon \frac{\partial}{\partial y} g(\sigma,y) + y^\gamma f^*(\sigma,y) + y^\gamma \varepsilon g(\sigma,y) - y^\gamma(\gamma+1)\beta \sigma \varepsilon k f^*(\sigma,y) = 0.
$$
Finally, using that $f^*$ solves \eqref{eqn:stat_renewal} and substituting its expression \eqref{eqn:stat_sol}, we get
$$
\frac{\partial}{\partial y} g(\sigma,y) + y^\gamma g(\sigma,y) - \frac{\beta \sigma k(\gamma+1)}{2\Lambda} y^\gamma e^{-\frac{y^{\gamma + 1}}{\gamma + 1}} = 0.
$$
We can define the linearized operator as 
\begin{equation}
\label{eqn:lin_operator_renewal}
\mathcal{L}_{\gamma}^{\text{lin}}g(\sigma,y) :=  -\frac{\partial}{\partial y} g(\sigma,y) - y^\gamma g(\sigma,y) + \frac{\beta \sigma k(\gamma+1)}{2\Lambda} y^\gamma e^{-\frac{y^{\gamma + 1}}{\gamma + 1}}.
\end{equation}

We proceed with the linearization of the integral condition in the second line of Sys.\! \eqref{eqn:kfp_renewal}:
\begin{align*}
f^*(\sigma,0) + &\varepsilon g(\sigma,0) = \int_0^\infty[f^*(-\sigma,y) + \varepsilon g(-\sigma,y)] y^\gamma e^{\beta \sigma \varepsilon k (\gamma + 1)}\\
& \approx \int_0^\infty f^*(-\sigma,y) y^\gamma ( 1+ \beta \sigma \varepsilon k (\gamma + 1)) + \varepsilon \int_0^\infty g(-\sigma,y) y^\gamma ( 1+ \beta \sigma \varepsilon k (\gamma + 1))\\
& \approx \int_0^\infty f^*(-\sigma,y) y^\gamma +  \beta \sigma \varepsilon k (\gamma + 1)\int_0^\infty f^*(-\sigma,y) y^\gamma + \varepsilon \int_0^\infty g(-\sigma,y) y^\gamma.
\end{align*}
Using again that $f^*$ solves \eqref{eqn:stat_renewal} and its expression in \eqref{eqn:stat_sol}, we get
\begin{equation}
\label{eqn:border_lin_renewal}
g(\sigma,0) = \frac{\beta \sigma k (\gamma + 1)}{2\Lambda} + \int_0^\infty g(-\sigma,y) y^\gamma dy.
\end{equation}

In order to gain indications on the stability properties of the stationary solution to \eqref{eqn:kfp_renewal} with $m = 0$, we study the discrete spectrum of $\mathcal{L}_{\gamma}^{\text{lin}}$ defined in \eqref{eqn:lin_operator_renewal}, i.e., we search for the eigenfunctions $g$ and the eigenvalues $\lambda \in \mathbb{C}$, satisfying the linearized integral conditions \eqref{eqn:integral_lin_renewal} and \eqref{eqn:border_lin_renewal} found above, and such that
\begin{equation}
\label{eqn:eigen_renewal_operator}
\mathcal{L}_{\gamma}^{\text{lin}} g(\sigma,y) = \lambda g(\sigma,y),
\end{equation}
which is equivalent to 
\begin{equation}
\label{eqn:eigen_renewal}
\frac{\partial}{\partial y} g(\sigma,y) + y^\gamma g(\sigma,y) - \frac{\beta \sigma k(\gamma + 1)}{2\Lambda}y^\gamma  e^{-\frac{y^{\gamma +1}}{\gamma +1}} = -\lambda g(\sigma,y).
\end{equation}
The eigen-system around $m = 0$ is thus given by \eqref{eqn:linear_renewal}, where, recall by \eqref{eqn:def_of_k_renewal}, $k = 2\int_0^\infty g(1,y) dy$, and $\Lambda = \int_0^\infty e^{-\frac{y^{\gamma +1}}{\gamma + 1}} dy$.

\begin{rem}
\label{formal_derivation}
The derivation of the linearized operator \eqref{eqn:eigen_renewal_operator} was formal. One could think to define it more rigorously, by indicating an Hilbert space where $\mathcal{L}_{\gamma}^{\text{lin}}$ acts on. The natural choice appears to be (a subspace of) $\Big(L^2_{\mu_{\gamma}}\big(\mathbb{R}^+\big)\Big)^2$ satisfying conditions \eqref{eqn:integral_lin_renewal} and \eqref{eqn:border_lin_renewal},  where the outer square comes from the explicitation of the spin variable $\sigma = \pm 1$, and the measure $\mu_\gamma$ is defined as
\begin{equation}
\mu_\gamma(dy) := f^*(\sigma,y)dy = \frac{1}{2\Lambda}e^{-\frac{y^{\gamma + 1}}{\gamma +1}} dy.
\end{equation}
As in the computations we do not use the particular choice of domain of the operator or its properties, we do not investigate further on this. 
\end{rem}

\begin{proof}[Proof of Proposition \ref{gamma1_prop}]
In order to solve the first equation in \eqref{eqn:gamma_1}, we set $h(\sigma,y) := g(\sigma,y) e^{\frac{y^2}{2}}$. It holds
$$
\frac{\partial}{\partial y}h(\sigma,y) = -\lambda h(\sigma,y) + \frac{y \beta \sigma k}{\sqrt{\frac{\pi}{2}}},
$$
whose solution is
$$
h(\sigma,y) = e^{-\lambda y}\left[h(\sigma,0) + \frac{\beta \sigma k}{\sqrt{\frac{\pi}{2}}}\int_{0}^y u e^{\lambda u} du\right].
$$
Noting that $\int_0^y u e^{\lambda u} du = \frac{1}{\lambda^2} - \frac{e^{\lambda y}}{\lambda^2} + \frac{e^{\lambda y}}{\lambda}y$, we obtain
\begin{equation}
\label{eqn:g_aux_g1}
g(\sigma,y) = e^{-\frac{y^2}{2}} e^{-\lambda y} \left[ g(\sigma,0) + \frac{\beta \sigma k}{\sqrt{\frac{\pi}{2}}}\left(\frac{1}{\lambda^2} - \frac{e^{\lambda y}}{\lambda^2} + \frac{e^{\lambda y}}{\lambda}y\right) \right].
\end{equation}
We now impose the integral conditions. First, we note that $\int_0^\infty [g(\sigma,y) + g(-\sigma,y)]dy = 0$ is equivalent to $g(\sigma,y) + g(-\sigma,y) = 0$ for every $y \in \mathbb{R}^+$ because of expression \eqref{eqn:g_aux_g1}. For the computation of $k$, recalling notation \eqref{eqn:expression_h1g1}, we find
\begin{align*}
k = 2\int_0^\infty g(1,y) dy = 2g(1,0) H_1(\lambda) + 2\frac{\beta k}{\sqrt{\frac{\pi}{2}}}\frac{1}{\lambda^2} H_1(\lambda) - 2\frac{\beta k}{\lambda^2}\frac{1}{\sqrt{\frac{\pi}{2}}}\sqrt{\frac{\pi}{2}} + 2\frac{\beta k}{\sqrt{\frac{\pi}{2}}}\frac{1}{\lambda},
\end{align*}
so that
\begin{equation}
\label{eqn:value_of_k_g1}
k = \frac{2 g(1,0) H_1(\lambda)}{1 - 2\frac{\beta}{\lambda \sqrt{\frac{\pi}{2}}} - 2\frac{\beta H_1(\lambda)}{\lambda^2 \sqrt{\frac{\pi}{2}}} + 2\frac{\beta}{\lambda^2}}.
\end{equation}
The integral condition in the second line of \eqref{eqn:gamma_1} gives
\begin{align*}
g(\sigma,0) & = \frac{\beta \sigma k}{\sqrt{\frac{\pi}{2}}} + \int_0^\infty y\left[e^{-\frac{y^2}{2}}e^{-\lambda y}\left(g(-\sigma,0) - \frac{\beta \sigma k}{\sqrt{\frac{\pi}{2}}} \left(\frac{1}{\lambda^2} - \frac{e^{\lambda y}}{\lambda^2} + \frac{e^{\lambda y}}{\lambda} y\right)\right)\right]\\
& =  \frac{\beta \sigma k}{\sqrt{\frac{\pi}{2}}} - g(\sigma,0)(1-\lambda H_1(\lambda)) -  \frac{\beta \sigma k}{\sqrt{\frac{\pi}{2}}} \frac{(1-\lambda H_1(\lambda))}{\lambda^2} \\
&\hspace{2cm} + \frac{1}{\lambda^2} \frac{\beta \sigma k}{\sqrt{\frac{\pi}{2}}} - \frac{1}{\lambda} \frac{\beta \sigma k}{\sqrt{\frac{\pi}{2}}}\int_0^\infty y^2 e^{-\frac{y^2}{2}}dy \\
& = \frac{\beta \sigma k}{\sqrt{\frac{\pi}{2}}}  - g(\sigma,0)(1-\lambda H_1(\lambda)) - \frac{(1-\lambda H_1(\lambda))}{\lambda^2}\frac{\beta \sigma k}{\sqrt{\frac{\pi}{2}}}  + \frac{1}{\lambda^2}\frac{\beta \sigma k}{\sqrt{\frac{\pi}{2}}}  - \frac{1}{\lambda}\beta \sigma k.
\end{align*}
In the second equality we have used that $\int_0^\infty y e^{-\frac{y^2}{2}}e^{-\lambda y} = 1-\lambda H_1(\lambda)$ which can be obtained by an integration by parts.
Solving for $g(1,0)$ in the above
\begin{align*}
g(1,0)[2 - \lambda H_1(\lambda)] = \beta k \left[\frac{1}{\sqrt{\frac{\pi}{2}}} -  \frac{(1-\lambda H_1(\lambda))}{\lambda^2}\frac{1}{\sqrt{\frac{\pi}{2}}} +  \frac{1}{\lambda^2}\frac{1}{\sqrt{\frac{\pi}{2}}}  - \frac{1}{\lambda}\right].
\end{align*}
Substituting the value of $k$ we found in \eqref{eqn:value_of_k_g1}, we get
\begin{align*}
g(1,0)[2 &- \lambda H_1(\lambda)] = \frac{2 \beta g(1,0) H_1(\lambda)}{1 - 2\frac{\beta}{\lambda \sqrt{\frac{\pi}{2}}} - 2\frac{\beta H_1(\lambda)}{\lambda^2 \sqrt{\frac{\pi}{2}}} + 2\frac{\beta}{\lambda^2}}\left[\frac{1}{\sqrt{\frac{\pi}{2}}} -  \frac{(1-\lambda H_1(\lambda))}{\lambda^2}\frac{1}{\sqrt{\frac{\pi}{2}}} +  \frac{1}{\lambda^2}\frac{1}{\sqrt{\frac{\pi}{2}}}  - \frac{1}{\lambda}\right],
\end{align*}
which is equivalent to
\begin{equation}
\label{eqn:almost_final_g1}
2 - \lambda H_1(\lambda) = \frac{2\beta H_1(\lambda)\left[\lambda^2 + \lambda H_1(\lambda) - \lambda\sqrt{\frac{\pi}{2}}\right]}{\lambda^2\sqrt{\frac{\pi}{2}} - 2\beta \lambda - 2\beta H_1(\lambda) + 2\beta \sqrt{\frac{\pi}{2}}}.
\end{equation}
As a polynomial in $\lambda$, \eqref{eqn:almost_final_g1} can be written as
$$
-\lambda^3 H_1(\lambda)\sqrt{\frac{\pi}{2}} + \lambda^2\sqrt{2\pi} -4\beta\lambda - 4\beta H_1(\lambda) + 2\sqrt{2\pi}\beta = 0,
$$
or, grouping for $H_1(\lambda)$,
$$
H_1(\lambda)\left[-4\beta - \lambda^3 \sqrt{\frac{\pi}{2}}\right] + \sqrt{2\pi}\lambda^2 - 4\beta \lambda + 2\beta \sqrt{2\pi} = 0,
$$
i.e.\! the zeros of $H_{\beta,1}(\lambda)$, provided we prove expression \eqref{eqn:h1_g1} for $H_1(\lambda)$.
In fact, as defined in \eqref{eqn:expression_h1g1}, $H_1(\lambda)$ is a holomorphic function on $\mathbb{C}$, whose expression in series is
\begin{align*}
H_1(\lambda) & = \int_0^\infty e^{-\frac{y^2}{2}}e^{-\lambda y} dy = \sum_{n=0}^{\infty} (-1)^n \frac{\lambda^n}{n!}\int_0^\infty y^n e^{-\frac{y^2}{2}}dy.
\end{align*}
The latter integral is known 
\begin{equation}
\label{eqn:Gamma_f_g1}
\int_0^\infty y^n e^{-\frac{y^2}{2}}dy = 2^{\frac{1}{2}(n-1)}\Gamma\left(\frac{n+1}{2}\right),
\end{equation}
where $\Gamma(\cdot)$ is the Gamma function.
When $n = 2m + 1$, for the properties of the Gamma function on $\mathbb{N}$, \eqref{eqn:Gamma_f_g1} reduces to
$$
\int_0^\infty y^n e^{-\frac{y^2}{2}}dy = 2^{\frac{1}{2}(n-1)}\Gamma\left(\frac{n+1}{2}\right) = 2^m m!.
$$
For $n = 2m$ instead we have, by the property $\Gamma\left(l +\frac{1}{2}\right) = \frac{(2l -1)!!}{2^l}\sqrt\pi$ for any $l \in \mathbb{N}$, 
$$
\int_0^\infty y^n e^{-\frac{y^2}{2}}dy = 2^{\frac{1}{2}(n-1)}\Gamma\left(\frac{n+1}{2}\right) = \sqrt{\frac{\pi}{2}}(2m-1)!!.
$$
We use these equalities, and reorder the terms of the absolutely convergent series of $H_1(\lambda)$ to finally get
$$
H_1(\lambda) = \sqrt{\frac{\pi}{2}}\sum_{m=0}^{\infty}\frac{\lambda^{2m}}{(2m)!!} - \lambda \sum_{m=0}^{\infty} \frac{(2\lambda)^{2m} m!}{(2m+1)!}\frac{1}{2^m}.
$$
\end{proof}

\begin{proof}[Proof of Proposition \ref{gamma2_prop}]
We proceed as in the previous case, by setting $h(\sigma,y) := g(\sigma,y) e^{\frac{y^3}{3}}$, so that
$$
\frac{\partial}{\partial y} h(\sigma,y) = -\lambda h(\sigma,y) + \frac{3}{2\Lambda}\beta \sigma k y^2.
$$
Thus,
$$
h(\sigma,y) = e^{-\lambda y}\left[h(\sigma,0) + \frac{3\beta \sigma k}{2\Lambda}\int_0^y u^2 e^{\lambda u} du\right].
$$
Since $\int_0^y u^2 e^{\lambda u} du = \frac{1}{\lambda^3}[-2 + e^{\lambda y}(2 + \lambda y(-2 + \lambda y))]$, we can write
\begin{equation}
\label{eqn:g_aux_g2}
\begin{aligned}
g(\sigma,y) = e^{-\frac{y^3}{3}}e^{-\lambda y} \left[g(\sigma,0) + \frac{3\beta \sigma k}{2\Lambda}\frac{1}{\lambda^3}\left(- 2 + 2e^{\lambda y} -2\lambda y e^{\lambda y} + \lambda^2 y^2 e^{\lambda y}\right)\right].
\end{aligned}
\end{equation}
Recalling notation \eqref{eqn:expression_h2g2}, we compute
\begin{align*}
k & = 2\int_0^\infty g(1,y) dy = 2 H_2(\lambda) g(1,0) - 2H_2(\lambda) \frac{3 \beta k}{\Lambda \lambda^3} + 2\frac{3 \beta k}{\Lambda \lambda^3}\int_0^\infty e^{-\frac{y^3}{3}}dy \\
&- 2\frac{3\beta k}{\Lambda \lambda^2}\int_0^\infty y e^{-\frac{y^3}{3}}dy + \frac{3 \beta k}{\Lambda \lambda}\int_0^\infty y^2 e^{-\frac{y^3}{3}}\\
& = 2 H_2(\lambda) g(1,0) - 2H_2(\lambda) \frac{3 \beta k}{\Lambda \lambda^3}  + 2 \frac{3 \beta k}{\lambda^3} - 2 \frac{3\beta k}{\Lambda \lambda^2} \frac{\Gamma(2/3)}{3^{1/3}} + \frac{3\beta k}{\Lambda \lambda}, 
\end{align*}
which gives
\begin{equation}
\label{eqn:value_of_k_g2}
k = \frac{2 g(1,0) H_2(\lambda)}{1 + 2H_2(\lambda)\frac{3\beta}{\Lambda \lambda^3} - 2\frac{3\beta}{\lambda^3} + 2\frac{3\beta}{\Lambda \lambda^2}\frac{\Gamma(2/3)}{3^{1/3}} - \frac{3\beta}{\Lambda \lambda}}.
\end{equation}
As before, the condition $\int_0^\infty [g(\sigma,y) + g(-\sigma,y)]dy = 0$ in \eqref{eqn:gamma_2} is equivalent to $g(\sigma, y) + g(-\sigma, y) = 0$ for every $y \in \mathbb{R}^+$ because of  \eqref{eqn:g_aux_g2}. Using this observation for $y = 0$ in the other integral condition, we compute
\begin{align*}
g(\sigma,0) & = \frac{3 \beta \sigma k}{2\Lambda} + \int_0^\infty y^2 \Bigg[e^{-\frac{y^3}{3}}e^{-\lambda y}\Bigg(-g(\sigma,0)- \frac{3\beta \sigma k}{2\Lambda} \frac{1}{\lambda^3}(-2 + 2 e^{\lambda y} - 2\lambda y e^{\lambda y} + \lambda^2 y^2 e^{\lambda y})\Bigg)\Bigg]dy.
\end{align*} 
Observing that, by integration by parts, $\int_0^\infty y^2 e^{-\frac{y^3}{3}} e^{-\lambda y} dy = 1 -\lambda H_2(\lambda)$, we find
\begin{align*}
g(\sigma,0) &= \frac{3 \beta \sigma k}{2\Lambda} - (1 -\lambda H_2(\lambda))g(\sigma,0) + \frac{3\beta \sigma k}{\Lambda \lambda^3}(1-\lambda H_2(\lambda)) - \frac{3\beta \sigma k}{\Lambda \lambda^3} \\
& + \frac{3\beta \sigma k }{\Lambda \lambda^2}3^{1/3}\Gamma(4/3) - \frac{3\beta \sigma k}{2\Lambda \lambda}3^{2/3}\Gamma(5/3).
\end{align*}
Computing in $\sigma = 1$ and grouping for $g(1,0)$,
\begin{align*}
g(1,0)[2 - \lambda H_2(\lambda)] &= k\left[\frac{3 \beta}{2\Lambda} + \frac{3\beta}{\Lambda \lambda^3}(1-\lambda H_2(\lambda)) - \frac{3\beta}{\Lambda \lambda^3} + \frac{3\beta }{\Lambda \lambda^2}3^{1/3}\Gamma(4/3) - \frac{3\beta}{2\Lambda \lambda}3^{2/3}\Gamma(5/3)\right]\\
& = k \left[\frac{3 \beta}{2\Lambda} - \frac{3\beta}{\Lambda \lambda^2}H_2(\lambda) + \frac{3\beta }{\Lambda \lambda^2}3^{1/3}\Gamma(4/3) - \frac{3\beta}{2\Lambda \lambda}3^{2/3}\Gamma(5/3)\right].
\end{align*}
Plugging expression \eqref{eqn:value_of_k_g2} for $k$,
\begin{align*}
2 - \lambda H_2(\lambda) &= \frac{2 H_2(\lambda)}{1 + 2H_2(\lambda)\frac{3\beta}{\Lambda \lambda^3} - 2\frac{3\beta}{\lambda^3} + 2\frac{3\beta}{\Lambda \lambda^2}\frac{\Gamma(2/3)}{3^{1/3}} - \frac{3\beta}{\Lambda \lambda}}\left[\frac{3}{2\Lambda}\beta - \frac{3\beta}{\Lambda\lambda^2}H_2(\lambda) \right.\\
& \left.+ \frac{3\beta}{\Lambda\lambda^2}3^{1/3}\Gamma(4/3) - \frac{3\beta}{2\Lambda\lambda}3^{2/3}\Gamma(5/3)\right].
\end{align*}
This gives
\begin{align*}
2 - \lambda H_2(\lambda) = \frac{2\lambda H_2(\lambda)\left[\frac{3}{2}\beta \lambda^2 - 3\beta H_2(\lambda) + 3\beta 3^{1/3}\Gamma(4/3) - \frac{3}{2}\beta \lambda 3^{2/3}\Gamma(5/3)\right]}{\Lambda \lambda^3 + 6\beta H_2(\lambda) - 6\beta \Lambda + 6\beta\lambda\frac{\Gamma(2/3)}{3^{1/3}} - 3\beta \lambda^2},
\end{align*}
which is equivalent to
\begin{align*}
2 \Lambda \lambda^3 &+ 12 \beta H_2(\lambda) - 12\beta\Lambda + 12\beta\lambda\frac{\Gamma(2/3)}{3^{1/3}} - 6\beta \lambda^2 - \lambda^4 \Lambda H_2(\lambda) + 6\beta\lambda H_2(\lambda)\Lambda - 6\beta\lambda^2\frac{\Gamma(2/3)}{3^{1/3}} H_2(\lambda) \\
& = 6\beta\lambda H_2(\lambda)3^{1/3}\Gamma(4/3) - 3\beta\lambda^2 H_2(\lambda) 3^{2/3}\Gamma(5/3).
\end{align*}
As a polynomial in $\lambda$, this is
\begin{align*}
-\Lambda H_2(\lambda)& \lambda^4+ 2\Lambda\lambda^3 + \lambda^2\left[-6\beta -6\beta\frac{\Gamma(2/3)}{3^{1/3}}H_2(\lambda) + 3\beta H_2(\lambda) 3^{2/3}\Gamma(5/3)\right]\\
& +\lambda\left[6\beta\Lambda H_2(\lambda) -6\beta H_2(\lambda)3^{1/3}\Gamma(4/3) + 12\beta \frac{\Gamma(2/3)}{3^{1/3}}\right] + 12\beta H_2(\lambda) - 12\beta\Lambda = 0.
\end{align*}
Equivalently, in terms of $H_2(\lambda)$ we have
\begin{align*}
H_2(\lambda)&[12\beta - \lambda^4 \Lambda + 6\beta \lambda \Lambda - 6\beta \lambda 3^{1/3}\Gamma(4/3) + 3\beta \lambda^2 3^{2/3}\Gamma(5/3)- 6\beta\lambda^2\frac{\Gamma(2/3)}{3^{1/3}}]\\
& + \left[2\Lambda \lambda^3 - 12\beta\Lambda + 12\beta\frac{\Gamma(2/3)}{3^{1/3}}\lambda - 6\beta \lambda^2 \right] = 0,
\end{align*}
i.e.\! the zeros of $H_{\beta,2}(\lambda)$ in \eqref{eqn:holo_g2}, provided we show the validity of expression \eqref{eqn:h2_g2} for $H_2(\lambda)$.
As defined in \eqref{eqn:expression_h2g2}, $H_2(\lambda)$ is a holomorphic function on $\mathbb{C}$, which can be expressed in series as
\begin{align*}
H_2(\lambda) &= \int_0^\infty e^{-\lambda y}e^{-\frac{y^3}{3}}dy = \sum_{n=0}^{\infty} (-1)^n \frac{\lambda^n}{n!}\int_0^\infty y^n e^{-\frac{y^3}{3}}dy\\
& = \sum_{n=0}^{\infty} (-1)^n \frac{\lambda^n}{n!} 3^{\frac{1}{3}(n-2)}\Gamma\left(\frac{n+1}{3}\right),
\end{align*}
which is expression \eqref{eqn:h2_g2}, where we have used the formula for $\int_0^\infty y^n e^{-\frac{y^3}{3}}dy = 3^{\frac{1}{3}(n-2)}\Gamma\left(\frac{n+1}{3}\right)$.
\end{proof}


\begin{thebibliography}{9}
\bibitem{andreis}
L. Andreis, D. Tovazzi. Coexistence of stable limit cycles in a generalized Curie-Weiss model with dissipation, \emph{J. Stat. Phys.}, 173(1), 163-181, 2018.

\bibitem{bertini}
L. Bertini, G. Giacomin, and K. Pakdaman. Dynamical aspects of mean field plane rotators and the Kuramoto model, \emph{Journal of Statistical Physics}, 138(1-3):270-290, 2010.

\bibitem{bremaud}
P. Bremaud. \emph{Markov Chains. Gibbs Fields, Monte Carlo Simulation, and Queues}, Springer-Verlag New York, 1998.

\bibitem{chen}
W.-C. Chen. Nonlinear dynamics and chaos in a fractional-order financial system, \emph{Chaos, Solitons \& Fractals}, 36(5):1305-1314, 2008.

\bibitem{collet}
F. Collet, P. Dai Pra. The role of disorder in the dynamics of critical fluctuations of mean field models, \emph{Electron. J. Probab.}, 17(26):1–40, 2012.

\bibitem{collet_dp_forme}
F. Collet, P. Dai Pra, and M. Formentin. Collective periodicity in mean-field models of cooperative behavior, \emph{Nonlinear Differential Equations and Applications NoDEA}, 22(5):1461-1482, 2015.

\bibitem{collet_formentin}
F. Collet, M. Formentin. Effects of Local Fields in a Dissipative Curie-Weiss Model: Bautin Bifurcation and Large Self-sustained Oscillations, \emph{J. Stat. Phys.} 176: 478, 2019.

\bibitem{collet_form_tov}
F. Collet, M. Formentin, and D. Tovazzi. Rhythmic behavior in a two-population mean-field Ising model, \emph{Phys. Rev. E}, 94(4), 042139, 2016.

\bibitem{daipra_reg_diss}
P. Dai Pra, M. Fischer, and D. Regoli. A Curie-Weiss model with dissipation, \emph{Journal of Statistical Physics}, 152(1):37-53, 2013.

\bibitem{daipra_regoli}
P. Dai Pra, G. Giacomin, and D. Regoli. Noise-induced periodicity: some stochastic models for complex biological systems, in \emph{Mathematical Models and Methods for Planet Earth}, pages 25-35. Springer, 2014.

\bibitem{paolo}
P.~Dai Pra, E~Sartori, and M~Tolotti.
\newblock  Climb on the Bandwagon: Consensus and periodicity in a lifetime utility model with strategic interactions.
\newblock \texttt{arXiv:1804.07469 [math.OC]}, April 2018.

\bibitem{locherbach} S. Ditlevsen, E. Löcherbach. Multi-class oscillating systems of interacting neurons, \emph{Stochastic Processes Appl.}, 127:1840-1869, 2017.

\bibitem{duong}
M. H. Duong, G. A. Pavliotis. Mean field limits for non-Markovian interacting particles: convergence to equilibrium, GENERIC formalism, asymptotic limits and phase transitions, \texttt{arXiv:1805.04959}, 2018.

\bibitem{ermentrout}
G. B. Ermentrout, D. H. Terman. \emph{Mathematical Foundations of Neuroscience}, volume 35. Springer Science \& Business Media, 2010.

\bibitem{ethier}
S. N. Ethier, T. G. Kurtz. \emph{Markov processes, Characterization and Convergence}, Wiley, Hoboken (NJ), 1986.

\bibitem{giacomin_poquet}
G. Giacomin, C. Poquet. Noise, interaction, nonlinear dynamics and the origin of rhythmic behaviors, \emph{Brazilian Journal of Probability and Statistics}, 29(2):460-493, 2015.

\bibitem{Graham} 
C. Graham, McKean-Vlasov It\^o-Skorokhod equations, and nonlinear diffusions with
discrete jump sets, \emph{Stochastic Processes Appl.}, 40(1):69-82, 1992.

\bibitem{kolokoltsov_book}
V.N. Kolokoltsov. \emph{Nonlinear Markov processes and kinetic equations}, volume 182. Cambridge University Press, 2010.

\bibitem{levy}
P. Levy. Processus semi-markoviens, \emph{Proc. Int. Cong. Math}, Amsterdam 1954, pp. 416–426.

\bibitem{scheutzow}
M. Scheutzow. Noise can create periodic behavior and stabilize nonlinear diffusions, \emph{Stochastic Processes Appl.}, 20(2):323-331, 1985.

\bibitem{touboul}
J. D. Touboul. The hipster effect: When anti-conformists all look the same, \emph{Discret. Contin. Dyn. Syst. B}, 24, 8, 4379-4415, 2019.

\bibitem{touboul2}
J. D. Touboul, G. Hermann, and O. Faugeras. Noise-induced behaviors in neural mean field dynamics, \emph{SIAM Journal on Applied Dynamical Systems}, 11(1):49-81, 2012.

\bibitem{turchin}
P. Turchin, A. D. Taylor. Complex Dynamics in Ecological Time Series, \emph{Ecology}, 73(1):289-305, 1992.

\bibitem{weidlich}
W. Weidlich, G. Haag. \emph{Concepts and Models of a Quantitative Sociology: The Dynamics of Interacting Populations}, volume 14. Springer Science \& Business Media, 2012.

\end{thebibliography}
\end{document}